\newtheorem{theorem}{Theorem}[section]
\newtheorem{hypothesis}[theorem]{Hypothesis}
\newtheorem{proposition}[theorem]{Proposition}
\newtheorem{lemma}[theorem]{Lemma}
\numberwithin{equation}{section}
\theoremstyle{remark}
\newtheorem{remark}[theorem]{Remark}
\newcommand{\Ric}{\mathop{\mathrm{Ric}}\nolimits}
\newcommand{\II}{\mathop{\mathrm{II}}\nolimits}
\newcommand{\Ad}{\mathop{\mathrm{Ad}}\nolimits}
\newcommand{\pr}{\mathrm{pr}}
\newcommand{\Hess}{\mathop{\mathrm{Hess}}\nolimits}
\newcommand{\Div}{\mathop{\mathrm{div}}\nolimits}
\author{Artem Pulemotov\thanks{School of Mathematics and Physics, The
University of Queensland, St Lucia,~QLD 4072, Australia} \\
\small{\texttt{a.pulemotov@uq.edu.au}}}
\title{The Ricci flow on domains in cohomogeneity one manifolds}
\begin{document}

\maketitle

\begin{abstract}
Suppose $G$ is a compact Lie group, $H$ is a closed subgroup of $G$, and the homogeneous space $G/H$ is connected. The paper investigates the Ricci flow on a manifold $M$ diffeomorphic to $[0,1]\times G/H$. First, we prove a short-time existence and uniqueness theorem for a $G$-invariant solution $g(t)$ satisfying the boundary condition $\II(g(t))=F(t,g_{\partial M}(t))$ and the initial condition $g(0)=\hat g$. Here, $\II(g(t))$ is the second fundamental form of $\partial M$, $g_{\partial M}$ is the metric induced on $\partial M$ by $g(t)$, $F$ is a smooth map and $\hat g$ is a metric on $M$. Second, we study Perelman's $\mathcal F$-functional on~$M$. Our results show, roughly speaking, that $\mathcal F$ is non-decreasing on a $G$-invariant solution to the modified Ricci flow, provided that this solution satisfies boundary conditions inspired by the~2012 paper of Gianniotis.

\end{abstract}

\section{Introduction}

Developing the theory of the Ricci flow on manifolds with boundary is a long-standing open problem with numerous potential applications. The present paper addresses several aspects of this problem in the setting of spaces with symmetries. We focus on the short-time existence and the uniqueness of solutions, as well as the monotonicity of Perelman's $\mathcal F$-functional. Before we describe our results, let us review the history of the subject.

Suppose $M$ is a $d$-dimensional manifold with $d\ge3$. The Ricci flow on $M$ is the partial differential equation
\begin{align}\label{intro_RF_eq}
\frac{\partial}{\partial t}g(t) = -2\Ric(g(t))
\end{align}
for a Riemannian metric $g(t)$ depending on the parameter $t\geq 0$. It is customary to interpret $t$ as time. In the right-hand side, $\Ric(g(t))$ stands for the Ricci curvature of $g(t)$. Given a Riemannian metric $\hat g$ on $M$, we supplement~\eqref{intro_RF_eq} with the initial condition
\begin{align}\label{intro_iniC}
g(0)=\hat g.
\end{align}
The Ricci flow on manifolds without boundary
has been widely studied. The reader will find a wealth of information about it
in the books~\cite{BCPLLN06,PT06,JMGT07}.

Assume the manifold $M$ is compact and $\partial M$ is nonempty. When trying to develop the theory of the Ricci flow on $M$ in this case, one faces a number of major roadblocks. For instance, it is necessary to find boundary conditions for equation~\eqref{intro_RF_eq} that would allow tractable analysis and admit a meaningful geometric interpretation. Doing so is difficult, as equation~\eqref{intro_RF_eq} is only weakly parabolic; see~\cite[Section~5.1]{PT06}, \cite[Introduction]{PG12} and also~\cite[Secion~3]{MA08}.
Note that the Ricci flow on \emph{surfaces} with boundary appears to be more approachable. For results in this area, consult the references in~\cite{PG12}. However, the higher-dimensional setting considered in the present paper encompasses a different set of difficulties and requires different techniques.

Initial progress regarding the Ricci flow on $M$ under the above assumptions was made by Shen in~\cite{YS92,YS96}. 
Let us briefly describe it. Suppose $\II(g(t))$ is the second fundamental form of $\partial M$ with respect to $g(t)$, $g_{\partial M}(t)$ is the metric induced on $\partial M$ by $g(t)$, and $\lambda$ is a fixed real number. Shen considered equation~\eqref{intro_RF_eq} subject to the boundary condition
\begin{align}\label{intro_ShenBC}
\II(g(t))=\lambda g_{\partial M}(t)
\end{align}
and the initial condition~\eqref{intro_iniC}. He outlined the proof of a short-time existence theorem. He also obtained a formula for the $t$-derivative of $\II(g(t))$. Despite this success, further investigation of problem~\eqref{intro_RF_eq}--\eqref{intro_ShenBC}--\eqref{intro_iniC} turned out to be complicated. As of today, no uniqueness theorem for this problem is found in the literature. The long-time behaviour of solutions was investigated in~\cite{YS92,YS96,XCTD06,JC09}, but it is not yet deeply understood. In fact, the majority of available results concern the special case where $\lambda=0$ (i.e., the boundary is totally geodesic). We invite the reader to see~\cite{MBXCAP10} for a discussion about letting the parameter $\lambda$ depend on $t$. Aside from the discussion, that paper contains two gradient estimates for the heat equation under~\eqref{intro_RF_eq}--\eqref{intro_ShenBC}.

Further progress in the study of the Ricci flow on $M$ was made by the author in~\cite{AP10}. More precisely, suppose $\mathcal H(g(t))$ is the mean curvature of $\partial M$ with respect to~$g(t)$. Fix a function $b:[0,\infty)\to\mathbb R$. The main result of~\cite{AP10} is a short-time existence theorem for solutions to~\eqref{intro_RF_eq} under the boundary condition 
\begin{align}\label{intro_AP_BC}
\mathcal H(g(t))= b(t)
\end{align}
and the initial condition~\eqref{intro_iniC}. This result was improved by Gianniotis in the paper~\cite{PG12}. More specifically, choose a $t$-dependent Riemannian metric $\eta(t)$ and a $t$-dependent real-valued function $\kappa(t)$ on $\partial M$. Gianniotis considered the Ricci flow subject to the boundary conditions
\begin{align}\label{intro_Gianniotis_BC}
[g_{\partial M}(t)] = [\eta(t)],\qquad \mathcal H(g(t)) = \kappa(t).
\end{align}
The square brackets here denote the conformal class. We emphasize that, in contrast with~\eqref{intro_AP_BC}, formulas~\eqref{intro_Gianniotis_BC} allow the mean curvature $\mathcal H(g(t))$ to be nonconstant on $\partial M$. The reasoning in~\cite{PG12} yielded the short-time existence and the uniqueness of solutions to~\eqref{intro_RF_eq}--\eqref{intro_Gianniotis_BC}--\eqref{intro_iniC} under natural assumptions. At the same time, describing the behaviour of these solutions for large~$t$ remains a challenging open problem. In the recent work~\cite{PG13}, Gianniotis made progress towards the resolution of this problem by producing several interesting estimates. However, a comprehensive long-time existence theorem is still out of reach. Note that Gianniotis's results were largely inspired by Anderson's work on the Einstein equation; see~\cite{MA08}.

Another direction in the study of the Ricci flow on manifolds with boundary is the analysis of Perelman's $\mathcal F$-functional. The reader may consult, e.g.,~\cite[Chapter~6]{PT06} for the definition and the key properties of~$\mathcal F$. Given a $t$-dependent Riemannian metric $g(t)$ and a $t$-dependent real-valued function $p(t)$ on $M$, it is well-known that the expression 
\begin{align*}
\mathcal F(g(t),p(t)) = \int_M(R(g(t)) + |\nabla p(t)|^2)e^{-p(t)}\,d\mu
\end{align*}
would be non-decreasing in $t$ if $\partial M$ were empty and the pair $(g(t),p(t))$ satisfied
\begin{align}\label{intro_MRF}
\frac{\partial}{\partial t}g(t) &= -2(\Ric(g(t)) + \Hess p(t)),\notag\\
\frac{\partial}{\partial t}p(t) &= -\Delta p(t) - R(g(t)).
\end{align}
(In the equalities above, $R$ and $\mu$ denote the scalar curvature and the volume measure.) In fact, one would be able to interpret system~\eqref{intro_MRF} as the gradient flow of $\mathcal F$. 
The metric $g(t)$ would be the pullback of a solution to~\eqref{intro_RF_eq} by a $t$-dependent diffeomorphism. Monotonicity properties of $\mathcal F$ are substantially harder to discover when $\partial M\ne\emptyset$. Lott's paper~\cite{JL13} provides several formulas for $\frac d{dt}\mathcal F(g(t),p(t))$ assuming the pair $(g(t),p(t))$ satisfies~\eqref{intro_MRF} and, after appropriate diffeomorphisms are performed, $\partial M$ evolves under the mean curvature flow. The works~\cite{JC07,JC09,JCAM12} contain related computations. However, none of the results in~\cite{JC07,JC09,JCAM12,JL13} asserts that $\mathcal F(g(t),p(t))$ is non-decreasing.

The present paper focuses on boundary-value problems for the Ricci flow~\eqref{intro_RF_eq} under the assumption
\begin{align}\label{intro_formM}
M\simeq[0,1]\times G/H,
\end{align}
where $G$ is a compact Lie group, $H$ is a closed subgroup of $G$, and $G/H$ is connected. In a sense, equality~\eqref{intro_formM} means $M$ possesses axial symmetry. The boundary of $M$ has two connected components. Spaces of the form~\eqref{intro_formM} arise as (closures of) domains on cohomogeneity one manifolds. Recently, the author used them in the study of the prescribed Ricci curvature equation; see~\cite{AP13} and also~\cite{AP11}. It is worth mentioning that cohomogeneity one manifolds enjoy numerous applications in geometry and mathematical physics. In particular, they have been used to construct important examples of Einstein metrics; see, e.g.,~\cite{ADMW99} and references therein. They were effectively employed in the paper~\cite{ADMW11} to investigate Ricci solitons. For more information on the basic properties and applications of cohomogeneity one manifolds, consult~\cite{CH10}.

The literature devoted to the Ricci flow on spaces with symmetries is rather extensive. The papers~\cite{JoLa13,MB14} are two examples of recent works on the subject. The introduction to~\cite{JLNS14} contains a survey of what is known in three dimensions. The vast majority of existing works, however, only consider manifolds without boundary.

Let us describe our results. In what follows, we assume $M$ has the form~\eqref{intro_formM} and the isotropy representation of $G/H$ splits into pairwise inequivalent irreducible summands. The latter assumption is quite standard in the theory of cohomogeneity one manifolds; we will discuss it in detail before stating our first theorem. Section~\ref{sec_STE_uniq} considers the Ricci flow~\eqref{intro_RF_eq} on $M$ subject to the boundary condition
\begin{align}\label{intro_my_BC}
\II(g(t))=F(t,g_{\partial M}(t)).
\end{align}
The letter $F$ here denotes a map with values in the space of symmetric $G$-invariant (0,2)-tensor fields on~$\partial M$. When $F(t,g_{\partial M}(t))=\lambda g_{\partial M}(t)$, formula~\eqref{intro_my_BC} becomes Shen's boundary condition~\eqref{intro_ShenBC}. In Section~\ref{sec_STE_uniq}, we establish the short-time existence and the uniqueness of $G$-invariant solutions to problem~\eqref{intro_RF_eq}--\eqref{intro_my_BC}--\eqref{intro_iniC} assuming~\eqref{intro_my_BC} holds at $t=0$ and $\hat g$ is $G$-invariant. The author intends to study the behaviour of these solutions for large $t$ in subsequent papers. Note that, until now,~\eqref{intro_Gianniotis_BC} has been the only boundary condition known to guarantee both the short-time existence and the uniqueness for the Ricci flow with given initial data in dimensions three or higher.

Section~\ref{sec_MotFF} studies the Perelman $\mathcal F$-functional on $M$. We begin with an examination of system~\eqref{intro_MRF} subject to the boundary conditions~\eqref{intro_Gianniotis_BC} on $g(t)$ and the Neumann condition $\frac\partial{\partial\nu}p(t)=0$ on $p(t)$. We first prove a short-time existence theorem for $G$-invariant solutions. Next, we show that $\mathcal F(g(t),p(t))$ is non-decreasing if $(g(t),p(t))$ is such a solution, $\eta(t)$ is independent of $t$, and $\kappa(t)$ is identically~$0$. In the process, we obtain a new formula for the $t$-derivative of $\mathcal F$ under~\eqref{intro_MRF}. The section ends with a discussion of how our results relate to those of~\cite{JL13}.

\section{Short-time existence and uniqueness}\label{sec_STE_uniq}

Consider a compact Lie group $G$ and closed subgroup $H$ of $G$. Suppose the homogeneous space $G/H$ is connected and $(d-1)$-dimensional with $d\ge3$. The objective of this paper is to investigate the Ricci flow on a smooth manifold $M$ diffeomorphic to $[0,1] \times G/H$. It will be convenient for us to assume that
\begin{align*}
M = [0,1] \times G/H.
\end{align*}
Such an assumption does not lead to any loss of generality. Obviously, the manifold $M$ has nonempty boundary $\partial M$ consisting of two connected components, $\{0\}\times G/H$ and $\{1\}\times G/H$. We will use the notation $M_0$ for the interior $M\setminus \partial M$. The group $G$ acts naturally on $M$.

\subsection{The existence and uniqueness theorem}\label{subsec_STE_uniq_results}

The Ricci flow is the partial differential equation
\begin{align}\label{RF_eq}
\frac{\partial}{\partial t}g(t) = -2\Ric(g(t))
\end{align}
for a Riemannian metric $g(t)$ on $M$ depending on the parameter $t\geq 0$. In the right-hand side, $\Ric(g(t))$ stands for the Ricci curvature of $g(t)$. As we explained in the introduction, one may learn about the history, the intuitive meaning, the technical peculiarities and the geometric applications of equation~\eqref{RF_eq} from many books, such as \cite{BCPLLN06,PT06,JMGT07}. 

Suppose $T^\ast\partial M \hat\otimes T^\ast\partial M$ is the bundle of symmetric $(0,2)$-tensors on $\partial M$. Consider a smooth
map
\begin{align*}
F:[0,\infty) \times (T^\ast\partial M \hat\otimes T^\ast\partial M) \to T^\ast\partial M \hat\otimes T^\ast\partial M
\end{align*}
such that $F(t,\cdot)$ is fiber-preserving for all $t \in [0, \infty)$. We will use $F$ to supplement equation~\eqref{RF_eq} with boundary conditions. Before we can do so, however, we need to make some preparations. Namely, suppose $\Gamma(T^\ast\partial M \hat\otimes T^\ast\partial M)$ is the space of continuous sections of $T^\ast\partial M \hat\otimes T^\ast\partial M$. Observe that $F$ induces a map from $[0,\infty)\times \Gamma(T^\ast\partial M \hat\otimes T^\ast\partial M)$ to $\Gamma(T^\ast\partial M \hat\otimes T^\ast\partial M)$. It will be convenient for us to use the same letter $F$ for this map. We assume the images of $G$-invariant sections of $T^\ast\partial M \hat\otimes T^\ast\partial M$ under $F(t,\cdot)$ are themselves $G$-invariant for all $t$. 

Let $\II(g(t))$ be the second fundamental form of $\partial M$ computed in $g(t)$ with respect to the outward unit normal. Thus, $\II(g(t))$ is a $t$-dependent (0,2)-tensor field on $\partial M$. Our sign convention is such that $\II(g(t))$ is positive-definite when $M$ is a closed ball in $\mathbb R^3$ and $g(t)$ is Euclidean. Suppose $g_{\partial M}(t)$ is the Riemmanian metric induced on $\partial M$ by $g(t)$. In this section, we study the Ricci flow equation~\eqref{RF_eq} under the boundary condition
\begin{align}\label{My_BC}
\II(g(t)) = F(t, g_{\partial M}(t)).
\end{align} 
Note that Y. Shen's works~\cite{YS92,YS96} investigated the situation where $F(t, g_{\partial M}(t)) = \lambda g_{\partial M}(t)$ for some $\lambda \in \mathbb{R}$. The arguments from~\cite{YS92,YS96} also apply when $\lambda$ is allowed to depend on~$t$.

Fix a smooth $G$-invariant Riemannian metric $\hat g$ on $M$. We supplement the Ricci flow equation~\eqref{RF_eq} with the initial condition
\begin{align}\label{RF_ini}
g(0) = \hat g.
\end{align}
Our objective in this section is to prove the short-time existence
and the uniqueness of solutions to
problem~\eqref{RF_eq}--\eqref{My_BC}--\eqref{RF_ini}. Before we can state our result, however, we need to impose an assumption on the homogeneous space $G/H$.

Let $\mathfrak g$ be the Lie algebra of the group $G$. Pick an
$\Ad(G)$-invariant scalar product $Q$ on $\mathfrak g$. Suppose $\mathfrak p$ is the orthogonal complement of the Lie algebra of $H$ in $\mathfrak g$ with respect to $Q$. We standardly identify $\mathfrak p$ with the tangent space of $G/H$ at $H$. The isotropy representation of $G/H$ then yields the structure of an $H$-module on $\mathfrak p$. We assume the following property of $\mathfrak p$ throughout the paper.
\begin{hypothesis}\label{assum_decomp_p}
The $H$-module $\mathfrak p$ appears as an orthogonal sum
\begin{align}\label{p_decomp}
\mathfrak p=\mathfrak p_1\oplus\cdots\oplus\mathfrak p_n
\end{align}
of pairwise non-isomorphic irreducible $H$-modules $\mathfrak
p_1,\ldots,\mathfrak p_n$.
\end{hypothesis}
Hypothesis~\ref{assum_decomp_p} is rather standard. It has come up in a number of papers, such as \cite{ADMW00,ADMW11}. Roughly speaking, it ensures that $G$-invariant (0,2)-tensor fields on $G/H$ are diagonal. Indeed, every such tensor field is determined by its restriction to $\mathfrak p$. Because the summands $\mathfrak p_1, \dots, \mathfrak p_n$ in \eqref{p_decomp} are non-isomorphic and irreducible, this restriction must be diagonal with respect to \eqref{p_decomp}. For a slightly more detailed discussion of Hypothesis~\ref{assum_decomp_p}, including a possible alternative to it, see the author's work \cite{AP13}. 

\begin{theorem}
\label{thm_STE}
Suppose
\begin{align}\label{compat_R_zero}
\II(\hat g)=F(0,\hat g_{\partial M}).
\end{align}
For some number
$T>0$, there exists $g:M\times[0,T)\to T^*M\otimes
T^*M$ such that the following statements hold:
\begin{enumerate}
\item
The map $g$ is smooth on $M_0\times(0,T)$ and continuous on
$M\times[0,T)$. Suppose $X$ and $Y$ are $G$-invariant vector fields on $M$ tangent to $\{r\}\times G/H$ for each $r\in[0,1]$. If $X$ and $Y$ are smooth, then the derivative of the map $M\times[0,T)\ni(x,t)\mapsto g(x,t)(X,Y)\in\mathbb R$ in the variable $x$ exits and is continuous on $M\times[0,T)$.

\item
For every $x\in M$ and $t\in[0,T)$, the tensor $g(x,t)$ is a
symmetric positive-definite tensor at the point~$x$. Thus, $g(\cdot, t)$ is
a Riemannian metric on $M$. We will use the notation $g(t)$ for this metric.

\item
The equality $g(t)=\gamma^*g(t)$ holds for all $\gamma\in G$ and
$t\in[0,T)$. In other words, $g(t)$ is
$G$-invariant.

\item
The $t$-dependent Riemannian metric $g(t)$ solves equation~\eqref{RF_eq} on
$M_0\times(0,T)$. This metric satisfies the boundary
condition~\eqref{My_BC} on $\partial M\times(0,T)$ and the initial
condition~\eqref{RF_ini} on $M$.
\end{enumerate}
If, for some number $T>0$, two smooth maps $g_1,g_2:M\times[0,T)\to
T^*M\otimes T^*M$ possess the above
properties~2, 3 and 4, then $g_1=g_2$.
\end{theorem}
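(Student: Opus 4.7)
The plan is to use the cohomogeneity one symmetry to reduce problem~\eqref{RF_eq}--\eqref{My_BC}--\eqref{RF_ini} to a quasilinear system in the single spatial variable $r\in[0,1]$, apply a DeTurck-type gauge fix to make that system strictly parabolic, solve the resulting boundary-value problem by standard quasilinear parabolic theory, and then pull back by the DeTurck diffeomorphism to recover a solution of the original problem.

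First I would parametrize $G$-invariant metrics. Under Hypothesis~\ref{assum_decomp_p} every $G$-invariant Riemannian metric on $M=[0,1]\times G/H$ has the form
\begin{align*}
g = a(r)\,dr^2 + \sum_{i=1}^n x_i(r)\,Q|_{\mathfrak p_i},
\end{align*}
where $Q|_{\mathfrak p_i}$ denotes the $G$-invariant symmetric $(0,2)$-tensor on $G/H$ coming from $Q$ restricted to $\mathfrak p_i$. A $G$-invariant time-dependent metric then corresponds to positive functions $a,x_1,\dots,x_n$ on $[0,1]\times[0,T)$, and \eqref{RF_eq} becomes a quasilinear system in these $n{+}1$ scalar unknowns, while \eqref{My_BC} becomes pointwise conditions at $r=0,1$ on the $x_i$ and the normalized radial derivatives $(\partial_r x_i)/\sqrt{a}$. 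This reduced system is only weakly parabolic because of residual radial reparametrization freedom.

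I would break this degeneracy by the DeTurck trick: fix $\hat g$ as background, build the $G$-invariant vector field $W(t)=w(r,t)\partial_r$ from the difference of Christoffel symbols of $g(t)$ and $\hat g$, and replace \eqref{RF_eq} by
\begin{align*}
\partial_t g(t) = -2\Ric(g(t)) + \mathcal L_{W(t)}g(t).
\end{align*}
The principal symbol of this modified system becomes $a^{-1}\partial_r^2$ acting diagonally on each of $a,x_1,\dots,x_n$, so the system is strictly parabolic. I would simultaneously require $w(0,t)=w(1,t)=0$, which forces the DeTurck flow to fix $\partial M$ pointwise and so preserves \eqref{My_BC} under pullback; this appears as a Dirichlet-type gauge condition on $a$ at the endpoints, complementing the oblique condition on the $x_i$ coming from \eqref{My_BC}. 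Standard short-time existence theory for quasilinear strictly parabolic systems with nonlinear boundary conditions (of Ladyzhenskaya--Solonnikov--Ural'tseva type) then provides a short-time solution $\tilde g(t)$; the compatibility~\eqref{compat_R_zero} supplies exactly the zeroth-order corner compatibility at $t=0$ needed for the regularity stated in the theorem. Integrating the ODE $\dot\varphi_t=W(t)\circ\varphi_t$ with $\varphi_0=\mathrm{id}$ yields a $G$-equivariant family of diffeomorphisms fixing $\partial M$, and $g(t)=\varphi_t^*\tilde g(t)$ solves the original problem. Uniqueness is then obtained by applying the same DeTurck construction with a common background to any two solutions, invoking uniqueness for the parabolic boundary-value problem, and undoing the diffeomorphism.

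The main obstacle I expect is verifying the Lopatinskii--Shapiro complementing condition for the boundary operator at $r=0,1$: one must check that the oblique condition coming from $F$ together with the Dirichlet gauge condition on $a$ produces a well-posed parabolic problem for the DeTurck-modified principal symbol. A secondary difficulty is arranging $w|_{\partial M}\equiv 0$ consistently with both \eqref{compat_R_zero} and the gauge condition on $a$; this may force a careful choice of the initial DeTurck vector field $W(0)$ and possibly a modification of the reference metric in a collar neighborhood of $\partial M$.
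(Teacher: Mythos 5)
Your overall strategy mirrors the paper's: reduce via the cohomogeneity-one ansatz to a quasilinear system in $r\in[0,1]$, apply a DeTurck gauge fix, solve a strictly parabolic boundary-value problem on $[0,1]$, and pull back by the DeTurck diffeomorphism. There are, however, two genuine gaps.

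First, you assert that under Hypothesis~\ref{assum_decomp_p} \emph{every} $G$-invariant metric on $M$ has the diagonal form $a(r)\,dr^2+\sum_i x_i(r)\,Q|_{\mathfrak p_i}$. This is not automatic. The decomposition~\eqref{p_decomp} may contain a \emph{trivial} one-dimensional $H$-summand, in which case a $G$-invariant symmetric $(0,2)$-tensor can carry a nonzero $dr\odot\theta$ component for the corresponding invariant $1$-form $\theta$ on $G/H$. Even when all summands are nontrivial, your uniqueness argument still needs to know that a $G$-invariant Ricci flow starting from the diagonal $\hat g$ \emph{remains} diagonal; you are implicitly reparametrizing two solutions $g_1,g_2$ as tuples $(a,x_1,\dots,x_n)$ without justification. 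The paper supplies this via Lemma~\ref{lem_symm_pres}, which uses the vanishing of the mixed $dr$--$G/H$ components of $\Ric$ (a consequence of Hypothesis~\ref{assum_decomp_p}, cf.\ \cite[Proposition~1.14]{KGWZ02}) to show that the $\hat g$-normal direction stays normal under the flow. Without an analogue of that lemma, your uniqueness step does not get off the ground.

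Second, the characterization of the gauge boundary condition is wrong: requiring the radial DeTurck field $w$ to vanish at $r=0,1$ is \emph{not} a Dirichlet condition on $a$. With the standard DeTurck field built from the difference of Christoffel symbols, the radial component involves $\partial_r a$ (and $\partial_r x_i$), so $w|_{\partial M}=0$ is an oblique/Neumann-type condition on $a$; this is precisely what appears in~\eqref{par_red_BC} for $\bar h$. Imposing an actual Dirichlet condition on $a$ at the endpoints would \emph{not} force the DeTurck diffeomorphism to fix $\partial M$, the pullback would then fail to be defined on all of $M$, and~\eqref{My_BC} would not be recovered. Once the gauge condition is written correctly as a Neumann condition, the worry you raise about Lopatinskii--Shapiro dissolves: the DeTurck-modified system has diagonal principal part $a^{-1}\partial_r^2$ with Neumann-type boundary conditions on all $n+1$ unknowns, for which the complementing condition is classical (and is what Lemma~\ref{lem_11STE} packages).
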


\begin{remark}
One may be able to improve the differentiability of $g$ near $\partial M\times[0,T)$ by imposing higher-order compatibility conditions on $F$ and $\hat g$; cf. \cite[Section~5]{PG12}. We will not discuss this further in the present paper.
\end{remark}

\begin{remark}
It is convenient for us to assume that the maps $g_1$ and $g_2$ are smooth on $M\times[0,T)$. However, this assumption can be relaxed. It suffices to demand, for example, that the following two statements hold:
\begin{enumerate}
\item
The maps $g_1$ and $g_2$ are smooth on $M_0\times(0,T)$.
\item
The derivatives $\frac{\partial^i}{\partial t^i}\hat\nabla^j g_1$ and $\frac{\partial^i}{\partial t^i}\hat\nabla^j g_2$ exist and are continuous on $M\times[0,T)$ when $2i+j\le3$.
\end{enumerate}
Here, $\hat\nabla$ denotes covariant differentiation with respect to $\hat g$. The details are left to the reader; cf.~\cite[Theorem~1.3]{PG12}.
\end{remark}

\subsection{Three lemmas}
Before we can prove Theorem~\ref{thm_STE}, we need to make some preparations and state three lemmas. Note that the material laid out here will also be essential to the arguments in Section~\ref{sec_MotFF}. Let us begin by fixing a geodesic $\alpha:[0,1] \to M$ with respect to the metric $\hat g$. We choose $\alpha$ so that it intersects all the $G$-orbits on $M$ orthogonally and $\alpha(r)$ lies in $\{r\}\times G/H$ for all $r \in [0,1]$. The map $\Theta:M \to M$ given by the formula $\Theta(r, \gamma H) = \gamma \alpha(r)$ is a diffeomorphism. The equality
\begin{align*}
\Theta^*\hat g = \hat h^2(r)\,dr\otimes d r + \hat g^r,\qquad r \in[0,1],
\end{align*}
holds true. In the right hand side, $\hat h: [0,1] \to (0, \infty)$ is a smooth function, and $\hat g^r$ is a $G$-invariant Riemannian metric on $G/H$ for every $r \in [0,1]$. Note that $\hat g^r$ is fully determined by its restriction to $\mathfrak p$. Hypothesis~\ref{assum_decomp_p} implies the existence of smooth functions $\hat f_1, \dots, \hat f_n:[0,1] \to (0,\infty)$ such that
\begin{align*}
\hat g^r(X,Y) = \hat f_1^2(r)Q(\pr_{\mathfrak p_1}X, \pr_{\mathfrak p_1}Y)+\dots+\hat f_n^2(r)Q(\pr_{\mathfrak p_n}X, \pr_{\mathfrak p_n}Y),\qquad X,Y \in \mathfrak p.
\end{align*}
The notation $\pr_{\mathfrak p_k}X$ and $\pr_{\mathfrak p_k}Y$ here stands for the projection of $X$ and $Y$ onto $\mathfrak p_k$ for $k = 1, \dots, n$. In what follows, we assume that the diffeomorphism $\Theta$ is the identity map on $M$. This assumption does not lead to loss of generality. Thus, the equality
\begin{align}\label{symm_ans}
\hat g = \hat h^2(r)\,dr\otimes dr + \hat g^r, \qquad r \in [0,1],
\end{align}
holds true. 

Our first lemma essentially shows that any $G$-invariant solution to \eqref{RF_eq}, subject to the initial condition~\eqref{RF_ini}, must have the form~\eqref{symm_ans}. This fact is crucial to the proof of Theorem~\ref{thm_STE}. It is also important to the arguments in Section~\ref{sec_MotFF}.
\begin{lemma}\label{lem_symm_pres}
Assume $w:M_0\times [0,T) \to T^*M\otimes T^*M$ is a smooth map satisfying the following requirements:
\begin{enumerate}
\item For every $x \in M$ and $t \in [0,T)$, the tensor $w(x,t)$ is a symmetric positive-definite tensor at the point~$x$.
\item Given $\gamma \in G$ and $t \in [0,T)$, the Riemannian metric $w(t) = w(\cdot,t)$ satisfies the formula $w(t) = \gamma^*w(t)$.
\item The equality
\begin{align*}
\frac{\partial}{\partial t}w(t) = -2\Ric(w(t))
\end{align*}
holds on $M_0 \times (0,T)$, and the equality 
\begin{align*}
w(0) = \hat g
\end{align*}
holds on $M_0$.
\end{enumerate}
Then
\begin{align*}
w(t) = z^2(r,t)\,dr\otimes dr + w^r(t), \qquad r\in(0,1),\,t\in[0,T).
\end{align*}
In the right-hand side, $z$ is a function on $(0,1)\times[0,T)$ with positive values, and $w^r(t)$ is a $G$-invariant Riemannian metric on $G/H$ for each $r\in(0,1)$ and $t\in[0,T)$. 
\end{lemma}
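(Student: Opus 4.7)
The plan is to exploit the \(G\)-invariance of \(w(t)\) together with Hypothesis~\ref{assum_decomp_p} to put \(w(t)\) into block form and then show that the off-diagonal block vanishes. Since the \(G\)-action leaves the \(r\)-coordinate fixed, \(\partial_r\) is a \(G\)-invariant vector field; hence \(w(t)(\partial_r,\partial_r)\) is constant along each orbit and descends to a positive function \(z^2(r,t)\) on \((0,1)\times[0,T)\). Transitivity of \(G\) on each slice \(\{r\}\times G/H\) makes the restriction \(w(t)|_{\{r\}\times G/H}\) into a \(G\)-invariant Riemannian metric \(w^r(t)\) on \(G/H\). The content of the lemma therefore reduces to verifying that the cross term \(V(r,t):=w(t)(\partial_r,\,\cdot\,)|_{\{r\}\times G/H}\), regarded at the base point \((r,eH)\) as an element of \(\mathfrak p^*\), is zero for all \((r,t)\in(0,1)\times[0,T)\).

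To this end I would use that the isotropy subgroup \(H\) at \((r,eH)\) acts trivially on \(\partial_r\) and through the isotropy representation on \(T_{eH}(G/H)\cong\mathfrak p\); the \(G\)-invariance of \(w(t)\) then forces \(V(r,t)\) to be an \(H\)-invariant element of \(\mathfrak p^*\). Under Hypothesis~\ref{assum_decomp_p}, the decomposition \(\mathfrak p=\mathfrak p_1\oplus\cdots\oplus\mathfrak p_n\) is into pairwise non-isomorphic irreducibles, and dually the same is true for \(\mathfrak p^*\). Schur's lemma gives \((\mathfrak p_i^*)^H=0\) whenever \(\mathfrak p_i\) is non-trivial, so \(V(r,t)\) must vanish on every such summand; if no \(\mathfrak p_i\) is trivial we are done.

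For the residual case, in which exactly one summand (necessarily one-dimensional) is trivial, I would revert to the evolution equation. Let \(\xi\) generate the trivial summand \(\mathfrak p_1\), extended to a \(G\)-invariant vector field on \(M\) tangent to every orbit, and set \(v(r,t):=w(t)(\partial_r,\xi)\). By~\eqref{symm_ans}, \(v(r,0)\equiv 0\), while the Ricci flow gives \(\partial_t v=-2\Ric(w(t))(\partial_r,\xi)\). Expanding the right-hand side in the block-form ansatz with cross term \(v\), I would check that every contribution is proportional to \(v\) or one of its spatial derivatives, so that \(v\equiv 0\) is a solution of the resulting linear parabolic equation; uniqueness for the associated initial-value problem then forces \(v\equiv 0\).

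The main obstacle is precisely this expansion: tracing the cross-term through the Christoffel symbols and the inverse metric of the ansatz to confirm linearity in \(v\) is tedious, even though the geometric heart of the argument is already contained in the representation-theoretic reduction of the first two paragraphs. Fortunately, the presence of a trivial isotropy summand is atypical, and in most concrete examples the symmetry argument alone suffices.
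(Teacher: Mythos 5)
Your opening reduction is correct, and the Schur observation that $V(r,t)=w(t)(\partial_r,\cdot)|_{\mathfrak p}$ is an $H$-invariant element of $\mathfrak p^*$, hence supported on any trivial isotropy summand, is valid and genuinely different from the paper's route. When $\mathfrak p$ has no trivial summand your proof is complete and, notably, does not even invoke the flow equation; that is a nice simplification.

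The gap is in the residual case. You propose to set $v(r,t)=w(t)(\partial_r,\xi)$, observe $v(\cdot,0)\equiv 0$, show the flow yields a parabolic equation for $v$ whose right-hand side vanishes at $v\equiv 0$, and appeal to ``uniqueness for the associated initial-value problem''. That uniqueness is not available. The equation for $v$ lives on the open rectangle $(0,1)\times(0,T)$, and neither the lemma's hypotheses nor the Ricci flow supply any boundary data for $v$ at $r=0,1$; a parabolic equation on a bounded interval with zero initial data but no boundary data has many nonzero smooth solutions. The paper's proof is structured precisely to avoid this: it fixes $r_0\in(0,1)$ and tracks the unit $w(t)$-normal $v(t)$ to $\{r_0\}\times G/H$ at the single point $(r_0,H)$, rather than a cross term of the metric as a function of $r$. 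Using $W_{id}(t)=0$ (a fact about the Ricci tensor of $G$-invariant metrics which Hypothesis~\ref{assum_decomp_p} together with \cite[Proposition~1.14]{KGWZ02} supplies, and which does \emph{not} reduce to Schur when a trivial summand is present), it shows that $\frac{d}{dt}v(t)$ is a scalar multiple of $v(t)$, so $v(t)\parallel v(0)$ by integrating a pointwise ODE in $t$ that needs no spatial boundary conditions. To complete your approach you would need either to recast your computation as such a pointwise ODE, or to obtain a priori control of $v$ near $\partial M$ — neither of which your sketch provides.
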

\begin{remark}
Let us emphasize that Lemma~\ref{lem_symm_pres} does not require any boundary conditions on $w(t)$. 
\end{remark}
\begin{proof}
Fix $r_0 \in (0,1)$. Given $t \in [0,T)$, suppose $v(t)$ is a unit normal to $\{r_0\}\times G/H$ at $(r_0, H)$ with respect to the metric $w(t)$. We assume $v(t)$ points in the direction of $\{1\}\times G/H$. Our plan is to show that $v(t)$ is a scalar multiple of $v(0)$. The assertion of the lemma will follow immediately.

Let $(y_1, \dots, y_d)$ be a local coordinate system on $M$ centred at $(r_0, H)$. Assume that, at $(r_0,H)$, the vectors $\frac{\partial}{\partial y_i}$ are tangent to $\{r_0\}\times G/H$ for $i=1,\dots,d-1$, and $\frac{\partial}{\partial y_d}$ coincides with $v(t)$. The formula
\begin{align*}
v(\tau) = \sum_{i=1}^d \frac{w^{id}(\tau)}{(w^{dd}(\tau))^\frac{1}{2}}\frac{\partial}{\partial y_i}, \qquad \tau \in [0,T),
\end{align*}
holds true. In the right-hand side, $w^{id}(\tau)$ are the components of the inverse of $w(\tau)$ at $(r_0, H)$ in the coordinates $(y_1, \dots, y_d)$. Taking advantage of assumption~3, we find
\begin{align*}
\frac{d}{d\tau}v(\tau)|_{\tau=t} &= \sum_{i,j,l = 1}^d \left(\frac{2W_{jl}(t)w^{ji}(t)w^{dl}(t)}{(w^{dd}(t))^\frac{1}{2}}-\frac{W_{jl}(t)w^{jd}(t)w^{dl}(t)w^{id}(t)}{(w^{dd}(t))^\frac{3}{2}}\right)\frac{\partial}{\partial y_i}\\
&=-\sum_{i=1}^d\left(W_{dd}(t)w^{id}(t)(w^{dd}(t))^\frac{1}{2} - 2\sum_{j=1}^dW_{jd}(t)w^{ji}(t)(w^{dd}(t))^\frac{1}{2}\right)\frac{\partial}{\partial y_i}\\
&=W_{dd}(t)(w^{dd}(t))^\frac{3}{2}\frac{\partial}{\partial y_d} = W_{dd}(t)w^{dd}(t)v(t)\\
&=\Ric(w(t))(v(t),v(t))v(t).
\end{align*}
Here, we write $W_{jl}(t)$ for the components of $\Ric(w(t))$ at $(r_0, H)$. To pass from the second line to the third, we used that fact that $W_{id}(t) = 0$ for all $i = 1, \dots, d-1$. This follows from Hypothesis~\ref{assum_decomp_p} (see~\cite[Proposition~1.14]{KGWZ02}).

The above equalities imply
\begin{align*}
v(t) = v(0)\exp\left(\int_0^t\Ric(w(\tau))(v(\tau),v(\tau))\,d\tau\right).
\end{align*}
Consequently, $v(t)$ is a scalar multiple of $v(0)$, and the assertion of the lemma becomes evident. 
\end{proof}

Given $T>0$, suppose $h,f_1,\dots,f_n$ are functions acting from $[0,1]\times[0,T)$ to $(0,\infty)$. Assume these functions are smooth on $(0,1)\times[0,T)$, $h$ is continuous on $[0,1]\times[0,T)$, and $f_1,\dots,f_n$ have first derivatives in $r$ that are continuous on $[0,1]\times[0,T)$. For every $t \in [0,T)$, define a Riemannian metric $g(t)$ on $M$ by setting 
\begin{align}\label{symm_met}
g(t) = h^2(r,t)\,dr\otimes dr + g^r(t), \qquad r\in[0,1].
\end{align}
This formula is analogous to~\eqref{symm_ans}. The notation $g^r(t)$ stands for the $G$-invariant Riemannian metric on $G/H$ such that 
\begin{align}\label{symm_met_pr}
g^r(X,Y) = f_1^2(r,t)Q(\pr_{\mathfrak p_1}X, \pr_{\mathfrak p_1}Y)+\dots+f_n^2(r,t)Q(\pr_{\mathfrak p_n}X, \pr_{\mathfrak p_n}Y),\qquad X,Y \in \mathfrak p.
\end{align}
We will demonstrate that it is possible to choose $T$ and $h, f_1,\dots,f_n$ in such a way that $g(t)$ solves the initial-boundary-value problem~\eqref{RF_eq}--\eqref{My_BC}--\eqref{RF_ini}. This will prove the existence portion of Theorem~\ref{thm_STE}.

Our second lemma provides an expression of the Ricci curvature of the metric $g(t)$ in terms of the functions $h, f_1,\dots,f_n$. To formulate it, we need more notation. Let $[\cdot,\cdot]$ and $P$ be the Lie bracket and the Killing form of the Lie algebra $\mathfrak g$. The irreducibility of the summands in the decomposition~\eqref{p_decomp} implies the existence of nonnegative numbers $\beta_1,\dots,\beta_n$ such that 
\begin{align*}
P(X,Y) = -\beta_kQ(X,Y), \qquad k =1,\dots,n,\;X,Y\in\mathfrak p_k.
\end{align*} 
Because the group $G$ is compact and Hypothesis~\ref{assum_decomp_p} holds, at least one of these numbers must be strictly positive. Let $d_k$ be the dimension of $\mathfrak p_k$.
We choose a $Q$-orthonormal basis $(e_j)_{j=1}^{d-1}$ of the space $\mathfrak p$ adapted to~\eqref{p_decomp}. In addition to $\beta_1,\dots,\beta_n$, we define 
\begin{align*}
\gamma_{i,k}^l = \frac{1}{d_i}\sum Q([e_{\iota_i},e_{\iota_k}], e_{\iota_l})^2
\end{align*}
for $i,k,l = 1, \dots, n$. The sum here is taken over all $\iota_i, \iota_k$ and $\iota_l$ such that $\iota_i \in \mathfrak p_i$, $\iota_k \in \mathfrak p_k$ and $\iota_l \in \mathfrak p_l$. Note that $\gamma_{i,k}^l$ is independent of the choice of $(e_j)_{j=1}^{d-1}$. One easily checks that $d_i\gamma_{i,k}^l = d_k\gamma_{k,i}^l = d_l\gamma_{l,i}^k$. For more identities satisfied by $(\gamma_{i,k}^l)_{i,k,l=1}^n$, consult~\cite{KGWZ02} and references therein. 

\begin{lemma}\label{lem_red_Ric}
The Ricci curvature of the Riemannian metric $g(t)$ given by~\eqref{symm_met}--\eqref{symm_met_pr} obeys the equality
\begin{align*}
\Ric(g(t)) = -\sum_{k=1}^n d_k\left(\frac{f_{krr}}{f_k}-\frac{h_r f_{kr}}{hf_k}\right)dr\otimes dr + \Ric^r(g(t)),\qquad r\in(0,1),
\end{align*}
where $\Ric^r(g(t))$ is the $G$-invariant $(0,2)$-tensor field on $G/H$ such that
\begin{align*}
\Ric^r&(g(t))(X,Y)\\
&=\sum_{i=1}^n\left(\frac{\beta_i}{2} + \sum_{k,l=1}^n\gamma_{i,k}^l\frac{f_i^4 - 2f_k^4}{4f_k^2f_l^2} - \frac{f_if_{ir}}{h}\sum_{k=1}^n d_k\frac{f_{kr}}{hf_k} + \frac{f_{ir}^2}{h^2}-\frac{f_if_{irr}}{h^2}+\frac{f_ih_rf_{ir}}{h^3}\right)Q(\pr_{\mathfrak p_i}X, \pr_{\mathfrak p_i}Y)
\end{align*}
for $X,Y \in \mathfrak p$. The subscript $r$ here means differentiation in $r \in (0,1)$.
\end{lemma}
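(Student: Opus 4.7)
The plan is to compute $\Ric(g(t))$ directly in an adapted orthonormal frame, treating $g(t)$ as a cohomogeneity one metric with the codimension-one foliation $M_0=\bigcup_{r\in(0,1)}\{r\}\times G/H$. Fix $t$ and drop it from the notation. By $G$-invariance it suffices to work at a point of the form $(r_0,eH)$ with $r_0\in(0,1)$. The unit normal to the slice at such a point is $N=h^{-1}\partial_r$. Pick a $Q$-orthonormal basis $(e_j)_{j=1}^{d-1}$ of $\mathfrak p$ adapted to~\eqref{p_decomp} and let $k(j)$ satisfy $e_j\in\mathfrak p_{k(j)}$; the vectors $N,\,f_{k(1)}^{-1}e_1,\ldots,f_{k(d-1)}^{-1}e_{d-1}$ then form an orthonormal basis at $(r_0,eH)$. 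Hypothesis~\ref{assum_decomp_p} together with~\cite[Proposition~1.14]{KGWZ02} ensures that the mixed components $\Ric(g)(N,e_j)$ vanish and that $\Ric(g)(e_j,e_k)$ vanishes unless $e_j$ and $e_k$ lie in the same summand $\mathfrak p_i$. Consequently, the Ricci tensor must have the claimed block-diagonal form, and only the coefficient on $dr\otimes dr$ and the coefficient on each $\mathfrak p_i$-block remain to be identified.

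For the $dr\otimes dr$ coefficient I would use the Riccati-type identity $\Ric(g)(N,N)=-\sum_j g(R(\tilde E_j,N)N,\tilde E_j)$, where $\tilde E_j=f_{k(j)}^{-1}e_j$. A direct computation of the Levi-Civita connection of $g(t)$ on a $Q$-invariant frame shows that the sectional curvature of the plane spanned by $N$ and $\tilde E_j$ with $e_j\in\mathfrak p_k$ equals $(f_{krr}/f_k-h_r f_{kr}/(hf_k))/h^2$; summing over $j$ and multiplying by $h^2$ produces the claimed coefficient $-\sum_{k=1}^n d_k(f_{krr}/f_k-h_rf_{kr}/(hf_k))$.

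For the tangential coefficients I would apply the Gauss equation: for $X,Y\in\mathfrak p_i$ viewed as vectors tangent to the slice at $(r_0,eH)$,
\begin{align*}
\Ric(g)(X,Y)=\Ric(g^{r_0})(X,Y)-\mathcal H\,\II(X,Y)+\sum_{j}\II(X,\tilde E_j)\II(Y,\tilde E_j)-g((\nabla_N\II)(X,Y),N)\cdot(\dots),
\end{align*}
in essence, the intrinsic Ricci of the slice plus second fundamental form corrections. The intrinsic part $\Ric(g^{r_0})(X,Y)$ is supplied by the standard formula for the Ricci curvature of a $G$-invariant diagonal metric on a reductive homogeneous space (see, e.g., \cite[\S1]{KGWZ02}), which yields the $\beta_i/2$ and $\gamma_{i,k}^l(f_i^4-2f_k^4)/(4f_k^2f_l^2)$ contributions. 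The diagonal nature of $g(t)$ in the $\mathfrak p_i$-decomposition gives $\II(X,Y)=(f_{ir}/(hf_i))g(X,Y)$ on $\mathfrak p_i$-vectors, so the mean curvature is $\mathcal H=\sum_k d_k f_{kr}/(hf_k)$; the remaining radial terms come from the time-independent version of the identity $\partial_r\II+\II\cdot\II-(\log h)_r\II=-\frac{1}{h^2}g(\nabla_N\nabla_N\cdot,\cdot)$ applied to $g^r$, which produces the $f_{ir}^2/h^2$, $-f_if_{irr}/h^2$ and $f_ih_rf_{ir}/h^3$ terms once written in terms of $f_i$.

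The main obstacle is the bookkeeping: the intrinsic Ricci formula in \cite{KGWZ02} is typically written with asymmetric index conventions, and rewriting it in the form $\gamma_{i,k}^l(f_i^4-2f_k^4)/(4f_k^2f_l^2)$ demands repeated application of the symmetries $d_i\gamma_{i,k}^l=d_k\gamma_{k,i}^l=d_l\gamma_{l,i}^k$ to symmetrize the double sum and absorb terms that would otherwise appear with coefficients involving $f_l^4$. Once this normalization step is carried out, and the extrinsic contributions are collected carefully, the two expressions align term by term with the statement of the lemma.
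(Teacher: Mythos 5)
The paper does not supply its own proof of this lemma---it simply refers the reader to \cite{AP13} for the derivation and to \cite{KGWZ02} for the underlying computations, so there is no in-text argument to compare against. Your outline is a reasonable reconstruction of the standard cohomogeneity one Ricci computation and uses essentially the same ingredients those references rely on: the block-diagonalization coming from \cite[Proposition~1.14]{KGWZ02} under Hypothesis~\ref{assum_decomp_p}, the radial sectional curvatures for $\Ric(N,N)$, the contracted Gauss equation for the tangential block, and the standard intrinsic Ricci formula for a diagonal $G$-invariant metric on $G/H$, symmetrized via $d_i\gamma_{i,k}^l=d_k\gamma_{k,i}^l=d_l\gamma_{l,i}^k$.

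Two small cautions about the details. First, the sectional curvature you write down, $K(N,\tilde E_j)=(f_{krr}/f_k-h_rf_{kr}/(hf_k))/h^2$, has the wrong sign (the correct value is its negative); this happens to be compensated by the extra minus sign you inserted in $\Ric(g)(N,N)=-\sum_j g(R(\tilde E_j,N)N,\tilde E_j)$, so the two errors cancel and you do land on the correct $dr\otimes dr$ coefficient, but the intermediate formulas are not individually right. Second, your displayed Gauss-type identity is not well-formed as written: $\II(X,Y)$ is a scalar, so the term $g((\nabla_N\II)(X,Y),N)\cdot(\dots)$ is not meaningful. What the contracted Gauss equation actually produces is $\Ric(g)(X,Y)-R^{\mathrm{amb}}(N,X,N,Y)$ in terms of $\Ric(g^{r_0})$, $\mathcal H$, and $\II\cdot\II$, and the leftover mixed-curvature term $R^{\mathrm{amb}}(N,X,N,Y)$ is then expanded using the radial (Riccati) evolution of $\II$; that is the step that generates the $-f_if_{irr}/h^2$, $f_ih_rf_{ir}/h^3$ and $f_{ir}^2/h^2$ terms. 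Once these two identities are stated carefully, your plan goes through and matches the lemma term by term.
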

The reader will find the proof of Lemma~\ref{lem_red_Ric} in~\cite{AP13}. The computations were essentially made in~\cite{KGWZ02}.

Lastly, we need an existence and uniqueness result for parabolic systems of partial differential equations on the interval $[0,1]$ under non-homogeneous Neumann boundary conditions. Given $T>0$, we will deal with the Sobolev-type space $W_5^{2,1}((0,1)\times(0,T))$. The reader may see~\cite[Section~I.1]{OLVSNU68} for its rigorous definition. It will be convenient for us to abbreviate $W_5^{2,1}((0,1)\times(0,T))$ to $W_{5,T}^{2,1}$. Throughout the paper, we will also encounter H\"older-type spaces $H^{\delta,\frac\delta2}([0,1]\times[0,T])$ with $\delta>0$. We refer to~\cite[Sections~I.1]{OLVSNU68} for their precise definition. To keep our notation short, we will abbreviate $H^{\delta,\frac\delta2}([0,1]\times[0,T])$ to $H_T^{\delta,\frac\delta2}$. Note that, according to~\cite[Lemma~3.3 in Chapter~II]{OLVSNU68}, every function in $W_{5,T}^{2,1}$ must lie in $H_T^{\frac65,\frac35}$. In particular, the derivative of such a function with respect to the first variable is continuous on $[0,1]\times[0,T]$. This fact is crucial to our further arguments.

\begin{lemma}\label{lem_11STE}
Fix $m \in \mathbb{N}$ and consider smooth functions
\begin{align*}
a&:[0,1]\times[0,\infty)\times\mathbb{R}^m\to (0,\infty),\\
A_i&:[0,1]\times[0,\infty)\times\mathbb{R}^m\times\mathbb{R}^m\to \mathbb{R},\\
B_i&:\{0,1\}\times[0,\infty)\times\mathbb{R}^m\to\mathbb{R},\\
\hat v_i&:[0,1] \to \mathbb R,\qquad i=1,\dots,m.
\end{align*}
Assume that
\begin{align*}
\hat v_{ir}(j)=B_i(j,0,\hat v(j)),\qquad j=0,1,\;i=1,\dots,m,
\end{align*}
where $\hat v=(\hat v_1,\ldots,\hat v_m)$. For some $T>0$, there exist
\begin{align*}
v_i:[0,1]\times[0,T]\to \mathbb{R},\qquad i=1,\dots,m,
\end{align*}
satisfying the following statements:
\begin{enumerate}
\item Each $v_i$ is smooth on $(0,1)\times[0,T]$ and lies in the space $W_{5,T}^{2,1}$.
\item For every $i=1,\dots,m$, the function $v_i$ solves the equation
\begin{align}\label{11_eq}
v_{it}(r,t) = a(r,t,v(r,t))v_{irr}(r,t)+A_i(r,t,v(r,t),v_r(r,t)),\qquad r\in(0,1),\;t\in(0,T),
\end{align}
subject to the boundary conditions
\begin{align*}
v_{ir}(j,t)=B_i(j,t,v(j,t)),\qquad j=0,1,\;t\in(0,T),
\end{align*}
and the initial condition
\begin{align*}
v_i(r,0) = \hat v_i(r),\qquad r\in[0,1].
\end{align*}
\end{enumerate}
Here, $v$ stands for $(v_1,\ldots,v_m)$, subscript $t$ denotes the derivative in $t$, and $v_r$ denotes the component-wise derivative in $r$.
If, for some $T>0$, the arrays
\begin{align*}
v_{1,i}&:[0,1]\times[0,T]\to\mathbb{R},\\
v_{2,i}&:[0,1]\times[0,T]\to\mathbb{R},\qquad i=1,\dots,m,
\end{align*}
possess the above properties 1 and 2, then
\begin{align*}
v_{1,i}=v_{2,i},\qquad i=1,\dots,m.
\end{align*}
\end{lemma}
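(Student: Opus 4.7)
The plan is to handle existence and uniqueness separately, working throughout in the anisotropic Sobolev space $W_{5,T}^{2,1}$. For existence I would run a Picard-style linearization scheme: given an iterate $v^{(n)}\in W_{5,T}^{2,1}$, define $v^{(n+1)}$ as the unique solution of the linear, decoupled system
\[
v^{(n+1)}_{it}=a(r,t,v^{(n)})\,v^{(n+1)}_{irr}+A_i(r,t,v^{(n)},v^{(n)}_r),\qquad v^{(n+1)}_{ir}(j,t)=B_i(j,t,v^{(n)}(j,t)),
\]
with initial data $\hat v_i$, using the classical $L^p$-parabolic theory in \cite[Chapter~IV]{OLVSNU68} with $p=5$. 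The choice $p=5$ is exactly what secures the embedding $W_{5,T}^{2,1}\hookrightarrow H_T^{6/5,3/5}$ quoted in the excerpt, so that $v^{(n)}$ and $v^{(n)}_r$ are continuous up to $[0,1]\times[0,T]$ and the compositions $a(\cdot,\cdot,v^{(n)})$, $A_i(\cdot,\cdot,v^{(n)},v^{(n)}_r)$, $B_i(j,\cdot,v^{(n)}(j,\cdot))$ lie in the spaces demanded by the theory. The assumed compatibility $\hat v_{ir}(j)=B_i(j,0,\hat v(j))$ is precisely what is needed to solve each linearized step in $W_{5,T}^{2,1}$. Shrinking $T$, the map $v^{(n)}\mapsto v^{(n+1)}$ becomes a contraction on a closed ball of $H_T^{6/5,3/5}$ around a suitable extension of $\hat v$, and its fixed point is the desired $W_{5,T}^{2,1}$-solution. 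Smoothness on $(0,1)\times[0,T]$ then follows from the standard interior Schauder bootstrap, which lifts regularity one derivative at a time thanks to $a>0$.

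For uniqueness, let $v_1,v_2$ both satisfy properties 1 and 2 on $[0,1]\times[0,T]$ and set $w_i=v_{1,i}-v_{2,i}$. A Hadamard expansion of the smooth functions $a$, $A_i$, $B_i$ in their last arguments produces a linear parabolic system
\[
w_{it}=\alpha(r,t)\,w_{irr}+\sum_{k=1}^m\bigl(\beta_{ik}(r,t)\,w_{kr}+\gamma_{ik}(r,t)\,w_k\bigr),\qquad i=1,\dots,m,
\]
whose coefficients are bounded on $[0,1]\times[0,T]$ (via the embedding into $H_T^{6/5,3/5}$ applied to $v_{1,i}$ and $v_{2,i}$), with linearized Neumann data $w_{ir}(j,t)=\sum_{k=1}^m\delta_{ijk}(t)\,w_k(j,t)$ and zero initial data $w(\cdot,0)\equiv 0$. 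Testing against $w_i$, summing over $i$, and integrating by parts yields
\[
\tfrac12\tfrac{d}{dt}\|w(\cdot,t)\|_{L^2}^2+\int_0^1\alpha\,|w_r|^2\,dr\le C\|w(\cdot,t)\|_{L^2}^2+C\sum_{j\in\{0,1\}}|w(j,t)|^2,
\]
and the endpoint trace is absorbed by the one-dimensional estimate $|w(j,t)|^2\le\varepsilon\|w_r(\cdot,t)\|_{L^2}^2+C_\varepsilon\|w(\cdot,t)\|_{L^2}^2$, with $\varepsilon$ chosen small relative to $\inf\alpha>0$. Gronwall's inequality then forces $w\equiv 0$.

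The main obstacle I expect is in the existence step: confirming that the boundary composition $B_i(j,t,v^{(n)}(j,t))$ sits in the anisotropic trace space on $\{j\}\times[0,T]$ required by the Neumann theory, that this composition is Lipschitz in $v^{(n)}$ in a norm making the contraction work after shrinking $T$, and that the first-order compatibility $\hat v_{ir}(j)=B_i(j,0,\hat v(j))$ matches exactly the $W_{5,T}^{2,1}$ trace regularity of both sides at $t=0$. No higher-order compatibility is imposed, which mirrors the limited boundary regularity of $g$ in Theorem~\ref{thm_STE}. These checks, together with the interior Schauder bootstrap for full smoothness, form the technical bulk of the proof; once they are in place, both halves of the lemma follow from standard machinery.
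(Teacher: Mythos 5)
The paper does not prove this lemma from first principles: it obtains it by citing the fixed-point construction of \cite[Theorem 2.1]{AP10}, which follows Weidemaier's $L_p$-approach \cite{PW91} --- a solution is first produced in the Sobolev-type space $W^{2,1}_{5,T}$ via a contraction argument and its smoothness is then upgraded by bootstrapping with the classical results of \cite{OLVSNU68} --- while uniqueness is taken from the arguments of \cite[Remark~2.2~(i)]{PW91}; the only adjustments noted concern the trace spaces on $\{0,1\}\times[0,T]$, since $\partial[0,1]$ is $0$-dimensional. Your existence argument (linearize, solve each step with the $L^p$ parabolic theory of \cite{OLVSNU68} at $p=5$, use the compatibility condition $\hat v_{ir}(j)=B_i(j,0,\hat v(j))$ to stay in $W^{2,1}_{5,T}$, contract after shrinking $T$, bootstrap interior smoothness) is essentially this same route, and your ``main obstacle'' paragraph correctly identifies exactly the trace-space issues the paper also flags. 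Your uniqueness argument, by contrast, is genuinely different: instead of the maximal-regularity/contraction-type estimates of \cite[Remark~2.2~(i)]{PW91} you run an $L^2$ energy--Gronwall argument on the Hadamard-linearized system. In principle this is a legitimate, more elementary alternative that avoids redoing the $L^5$ estimates for the difference of two solutions.

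However, as written the energy argument has a gap: the claim that all coefficients of the linearized system are bounded on $[0,1]\times[0,T]$ via the embedding $W^{2,1}_{5,T}\hookrightarrow H^{6/5,3/5}_T$ is not correct. That embedding controls $v_{m,i}$ and $v_{m,ir}$ only. When you difference the leading term, $a(r,t,v_1)v_{1,irr}-a(r,t,v_2)v_{2,irr}=a(r,t,v_1)w_{irr}+[a(r,t,v_1)-a(r,t,v_2)]\,v_{2,irr}$, the resulting zeroth-order coefficients $\gamma_{ik}$ contain $v_{2,irr}$, which the hypotheses guarantee only to lie in $L^5((0,1)\times(0,T))$ (smoothness is assumed only on $(0,1)\times[0,T]$, so $v_{2,irr}$ need not be bounded up to $r=0,1$). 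Hence the Gronwall constant $C$ in your differential inequality is not finite as stated. The argument can be repaired: estimate $\int_0^1|\gamma_{ik}|\,|w|^2\,dr\le\|\gamma_{ik}(\cdot,t)\|_{L^5}\|w(\cdot,t)\|_{L^{5/2}}^2$, use the one-dimensional interpolation of $\|w\|_{L^{5/2}}$ between $\|w\|_{L^2}$ and $\|w_r\|_{L^2}$, absorb the small $\|w_r\|_{L^2}^2$ contribution into $\int_0^1\alpha|w_r|^2\,dr$, and apply Gronwall with the integrable-in-time coefficient $\|\gamma_{ik}(\cdot,t)\|_{L^5}^{10/9}$ (finite in $L^1(0,T)$ because $\gamma_{ik}\in L^5$). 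With this correction --- or by simply following the paper's citation to \cite[Remark~2.2~(i)]{PW91} --- your uniqueness step goes through; the rest of the proposal matches the intended proof.
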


One may establish Lemma~\ref{lem_11STE} by repeating the reasoning from the proof of~\cite[Theorem 2.1]{AP10} and from~\cite[Remark 2.2 (i)]{PW91} with minor adjustments. We will not discuss these adjustments here, as they are fairly straightforward. The need for them arises primarily because the boundary of $[0,1]$ is 0-dimensional and, as a consequence, the definitions of Sobolev-type spaces on $\partial [0,1]\times[0,T]$
 require clarification.

The result in~\cite{AP10} is essentially the existence portion of Lemma~\ref{lem_11STE} with the interval $[0,1]$ replaced by a Riemannian manifold of dimension two or higher. The method of proof employed in~\cite{AP10} relies on a fixed-point argument, as executed by Weidemaier in the proof of~\cite[Theorem 2.1]{PW91}. A solution to the initial-boundary-value problem is first constructed in a Sobolev-type space. Its regularity is then established via a bootstrapping argument. Classical results from~\cite{OLVSNU68} are used in the process. The uniqueness portion of Lemma~\ref{lem_11STE} follows from the arguments in~\cite[Remark~2.2~(i)]{PW91}. Note that the reader may find results closely related to the lemma in Amann's paper~\cite{HA90}; specifically, see the theorem in the introduction.  

\subsection{The argument for existence and uniqueness}\label{sec_pfSTE}
According to Lemma~\ref{lem_red_Ric}, the Riemannian metric $g(t)$ satisfies the Ricci flow equation~\eqref{RF_eq} if
\begin{align}\label{red_RF_eq}
h_t &= \sum_{k=1}^nd_k\left(\frac{f_{krr}}{hf_k} - \frac{h_rf_{kr}}{h^2f_k}\right),\notag\\
f_{it} &= -\frac{\beta_i}{2f_i} - \sum_{k,l=1}^n\gamma_{i,k}^l\frac{f_i^4-2f_k^4}{4f_if_k^2f_l^2}+\frac{f_{ir}}{h}\sum_{k=1}^nd_k\frac{f_{kr}}{hf_k}-\frac{f_{ir}^2}{h^2f_i}+\frac{f_{irr}}{h^2}-\frac{h_rf_{ir}}{h^3},\notag\\
r &\in (0,1),\;t\in(0,T),\;i=1,\dots,n.
\end{align}
Let us now write the boundary condition~\eqref{My_BC} and the initial condition~\eqref{RF_ini} in terms of $h,f_1,\dots,f_n$. Given a $G$-invariant section $u \in \Gamma(T^*\partial M\hat\otimes T^*\partial M)$ and $j = 0,1,$ the restriction of $u$ to $\{j\}\times G/H$ is fully determined by how $u$ acts on the tangent space $T_{\{ j\}\times H}(\{j\}\times G/H)$. Identifying this space with $\mathfrak p$ in the natural way, we define the numbers $u_{j,1},\dots,u_{j,n} \in \mathbb{R}$ by the equality
\begin{align*}
u|_{\{j\}\times G/H}(X,Y)= u_{j,1}Q(\pr_{\mathfrak p_1}X, \pr_{\mathfrak p_1}Y)+\dots+u_{j,n}Q(\pr_{\mathfrak p_n}X, \pr_{\mathfrak p_n}Y),\qquad X,Y \in \mathfrak p.
\end{align*}
There exist smooth functions $F_{j,1},\dots,F_{j,n}$ from $[0,\infty)\times\mathbb{R}^n$ to $\mathbb{R}$ such that, for all $t\in[0,\infty)$ and all $G$-invariant $u \in \Gamma(T^*\partial M\hat\otimes T^*\partial M)$, we have
\begin{align*}
F(t,u)|_{\{j\}\times G/H}(X,Y)= &\;F_{j,1}(t,u_{j,1},\dots,u_{j,n})Q(\pr_{\mathfrak p_1}X, \pr_{\mathfrak p_1}Y)\\
&+\dots+F_{j,n}(t,u_{j,1},\dots,u_{j,n})Q(\pr_{\mathfrak p_n}X, \pr_{\mathfrak p_n}Y),\qquad X,Y \in \mathfrak p.
\end{align*}
A computation (cf.~\cite[Lemma 2]{AP11}) shows that $g(t)$ obeys~\eqref{My_BC} when
\begin{align}\label{red_BC}
f_{ir}(j,t)=(-1)^{j+1}\frac{h(j,t)F_{j,i}(t,f_1^2(j,t),\dots,f_n^2(j,t))}{f_i(j,t)},\qquad j=0,1,~t\in(0,T),~i=1,\ldots,n.
\end{align}
It is also easy to see that~\eqref{RF_ini} holds when
\begin{align}\label{red_ini}
h(r,0)=\hat h(r),\qquad f_i(r,0) =\hat f_i(r),\qquad r\in[0,1],\;i=1,\dots,n.
\end{align}
\begin{proof}[Proof of Theorem~\ref{thm_STE}]
We first prove the existence of a map $g$ possessing the listed properties. The method we use is an adaptation of the DeTurck trick (to be specific, the version of the DeTurck trick described in~\cite[Section~2.6]{BCPLLN06}). The main idea is to replace~\eqref{red_RF_eq} with the more tractable system~\eqref{par_red_RF}. Lemma~\ref{lem_11STE} will guarantee the short-time existence of a solution to~\eqref{par_red_RF} under appropriate boundary and initial conditions. Modifying this solution, we will produce functions $h, f_1,\dots,f_n$ which satisfy~\eqref{red_RF_eq}--\eqref{red_BC}--\eqref{red_ini}. The $G$-invariant $t$-dependent metric given by~\eqref{symm_met} and \eqref{symm_met_pr} will define a mapping with the properties asserted in the theorem. In particular, this metric will solve the Ricci flow. 

Lemma~\ref{lem_11STE} yields the existence, for some $T>0$, of functions $\bar h, \bar f_1, \dots, \bar f_n:[0,1]\times[0,T]\to(0,\infty)$ satisfying the equations
\begin{align}\label{par_red_RF}
\bar h_t &= \frac{\bar h_{rr}}{\bar h^2}-\frac{2\bar h_r^2}{\bar h^3}+\sum_{k=1}^nd_k\frac{\bar f_{kr}^2}{\bar h \bar f_k^2}-\Biggl(\frac{\hat h_r}{\hat h^2}+\sum_{k=1}^nd_k\frac{\hat f_{kr}}{\hat h\hat f_k}\Biggr)_r,\notag\\
\bar f_{it} &= \frac{\bar f_{irr}}{\bar h^2}-\frac{\bar f_{ir}^2}{\bar h^2 \bar f_i}-\sum_{k,l=1}^n\gamma_{i,k}^l\frac{\bar f_i^4 - 2\bar f_k^4}{4\bar f_i\bar f_k^2 \bar f_l^2} - \frac{\beta_i}{2\bar f_i} -\frac{\hat h_r\bar f_{ir}}{\bar h\hat h^2}+\sum_{k=1}^nd_k\frac{\bar f_{ir}\hat f_{kr}}{\bar h\hat h\hat f_k},\notag\\ r&\in(0,1),\;t\in(0,T),\;i=1,\dots,n,
\end{align}
the boundary conditions
\begin{align}\label{par_red_BC}
\bar h_r(j,t) &= (-1)^{j+1}\sum_{k=1}^nd_k\frac{\bar h^2(j,t)F_{j,k}(t,\bar f_1^2(j,t),\dots, \bar f_n^2(j,t))}{\bar f_k^2(j,t)}+\frac{\bar h^2(j,t)}{\hat h(j)}\Biggl(\frac{\hat h_r(j)}{\hat h(j)}-\sum_{k=1}^nd_k\frac{\hat f_{kr}(j)}{\hat f_k(j)}\Biggr),\notag\\
\bar f_{ir}(j,t) &= (-1)^{j+1}\frac{\bar h(j,t)F_{j,i}(t,\bar f_1^2(j,t),\dots, \bar f_n^2(j,t))}{\bar f_i(j,t)},\qquad j=0,1,\;t\in(0,T),\;i=1,\dots,n,
\end{align}
and the initial conditions
\begin{align}\label{par_red_ini}
\bar h(r,0) = \hat h(r),\qquad \bar f_i(r,0) = \hat f_i(r),\qquad r\in[0,1],\;i=1,\dots,n.
\end{align}
These functions are smooth on $(0,1)\times[0,T]$. They lie in the space $W_{5,T}^{2,1}$ and, consequently, the space $H_T^{\frac65,\frac35}$. 
Equality~\eqref{compat_R_zero} and classical regularity results (see, e.g.,~\cite[Theorem~5.3 in Chapter~IV]{OLVSNU68}) imply that the derivatives $\bar h_{rr}, \bar f_{1rr},\dots,\bar f_{nrr}$ exist and are continuous on $[0,1]\times[0,T]$. This enables us to define a function $\phi:[0,1]\times[0,T)\to\mathbb{R}$ by the formula
\begin{align}\label{def_phi}
\phi_t(r,t) = -\Bigg(\frac{\bar h_\rho(\rho,t)}{\bar h^3(\rho,t)}-\sum_{k=1}^nd_k\frac{\bar f_{k\rho}(\rho,t)}{\bar h^2(\rho,t)\bar f_k(\rho,t)}-\frac{\hat h_\rho(\rho)}{\bar h(\rho,t)\hat h^2(\rho)}&+\sum_{k=1}^nd_k\frac{\hat f_{k\rho}(\rho)}{\bar h(\rho,t)\hat h(\rho)\hat f_k(\rho)}\Bigg)\Bigg|_{\rho = \phi(r,t)}, \notag \\ r&\in[0,1],\;t\in(0,T),
\end{align}
and the requirement that $\phi(r,0) = r$ when $r \in [0,1]$. Here, the subscript $\rho$ denotes differentiation in $\rho$. We will use $\phi$ to modify $\bar h, \bar f_1,\dots,\bar f_n$ and obtain a solution to~\eqref{red_RF_eq}--\eqref{red_BC}--\eqref{red_ini}. Remark~\ref{rem_phimotiv} will explain the thought process that lead us to considering system~\eqref{par_red_RF} and to~\eqref{def_phi}.

The function $\phi$ is smooth on $(0,1)\times[0,T)$. Its derivative with respect to $r$ exists and is continuous on $[0,1]\times[0,T)$. Moreover, this derivative is strictly positive on $[0,1]\times[0,T)$. The boundary conditions~\eqref{par_red_BC} imply that the right-hand side of~\eqref{def_phi} is 0 when $\phi(r,t)\in\{0,1\}$ and $t \in (0,T)$. Consequently, the equality
\begin{align}\label{phi_bdy}
\phi(j,t)=j,\qquad j = 0,1,\;t\in[0,T),
\end{align}
holds true. Consider the real-valued functions $h, f_1, \dots, f_n$ on $[0,1]\times[0,T)$ defined as
\begin{align}\label{def_h_f}
h(r,t) = \phi_r(r,t)\bar h(\phi(r,t),t),\qquad f_i(r,t)=\bar f_i(\phi(r,t),t),\qquad r\in[0,1],\;t\in[0,T),\;i=1,\dots,n.
\end{align}
Remark~\ref{rem_phimotiv} discusses the geometric meaning of~\eqref{def_h_f}. Keeping in mind that $\bar h, \bar f_1,\dots, \bar f_n$ satisfy~\eqref{par_red_RF}, we can verify by direct computation that $h,f_1,\dots,f_n$ satisfy~\eqref{red_RF_eq}. One performs a computation of similar nature when one carries out the DeTurck trick on closed manifolds; cf.~\cite[Section~2.6]{BCPLLN06} and~\cite[Section~5.2]{PT06}. The boundary conditions~\eqref{par_red_BC} and formulas~\eqref{phi_bdy} imply that
\begin{align*}
f_{ir}(j,t) &= (-1)^{j+1}\phi_r(j,t)\frac{\bar h(j,t)F_{j,i}(t,\bar f_1^2(j,t),\dots, \bar f_n^2(j,t))}{\bar f_i(j,t)}\\
&= (-1)^{j+1}\frac{h(j,t)F_{j,i}(t,f_1^2(j,t),\dots, f_n^2(j,t))}{f_i(j,t)},\qquad j=0,1,\; t\in(0,T),\; i=1,\dots,n.
\end{align*}
Therefore, $h,f_1,\dots,f_n$ satisfy~\eqref{red_BC}. Finally, equality~\eqref{red_ini} holds for $h,f_1,\dots,f_n$ because~\eqref{par_red_ini} holds for $\bar h, \bar f_1,\dots,\bar f_n$ and $\phi(\cdot,0)$ is the identity map on $[0,1]$. 

We define a $G$-invariant $t$-dependent metric $g(t)$ on $M$ through formulas~\eqref{symm_met} and~\eqref{symm_met_pr}. One may interpret $g(t)$ as a map from $M\times[0,T)$ to $T^*M\otimes T^*M$. This map obviously possesses the desired properties.

Let us prove the uniqueness portion of the theorem. To do so, we first rewrite the Ricci flow equations for $g_1$ and $g_2$ in the form~\eqref{red_RF_eq}. We then replace the obtained systems with more tractable systems analogous to~\eqref{par_red_RF}. The approach we take is rather classical; cf.~\cite[Section~2.6]{BCPLLN06}. To make it work in our setting, however, we need an auxiliary result (specifically, Lemma~\ref{lem_symm_pres} above). The purpose of this result is to help us demonstrate that $g_1$ and $g_2$ can be simultaneously diagonalized. Roughly speaking, it states that the normals to $G$-orbits on $M$ with respect to $g_1$ and $g_2$ point in the same direction.

We argue by contradiction. Suppose $g_1$ and $g_2$ do not coincide. Without loss of generality, assume that 
\begin{align*}
\sup\{\tau \in [0,T)\,|\,g_1 = g_2\text{ on }M\times[0,\tau]\}=0.
\end{align*} 
Lemma~\ref{lem_symm_pres} and Hypothesis~\ref{assum_decomp_p} imply the existence of smooth positive functions $h_1,f_{1,1},\dots,f_{1,n}$ and $h_2,\allowbreak f_{2,1},\dots,f_{2,n}$ on $[0,1]\times[0,T)$ satisfying the formula 
\begin{align*}
g_m(t) = h_m^2(r,t)\,dr\otimes dr + g_m^r(t),\qquad m=1,2,\;r\in[0,1],\;t\in[0,T).
\end{align*}
Here, $g_m(t)$ is the $t$-dependent Riemannian metric given by the map $g_m$, and $g_m^r(t)$ is the $G$-invariant Riemannian metric on $G/H$ such that
\begin{align*}
g_m^r(t)(X,Y)= f_{m,1}^2(r,t)Q(\pr_{\mathfrak p_1}X, \pr_{\mathfrak p_1}Y)+\dots+f_{m,n}^2(r,t)Q(\pr_{\mathfrak p_n}X, \pr_{\mathfrak p_n}Y),\qquad X,Y \in \mathfrak p.
\end{align*}
It is clear that formulas~\eqref{red_RF_eq}--\eqref{red_BC}--\eqref{red_ini} will still hold if we replace $h,f_1,\dots,f_n$ in them by $h_m,\allowbreak f_{m,1},\dots,f_{m,n}$ for either $m=1$ or $m=2$.

According to classical existence results for parabolic problems (see, e.g.,~\cite[Theorem~6.1 in Chapter~V]{OLVSNU68}), for some $S \in \big(0,\frac T2\big)$, we can find $\phi_1,\phi_2:[0,1]\times[0,2S)\to\mathbb{R}$ obeying the equation
\begin{align*}
\phi_{mt}(r,t)&=\frac{\phi_{mrr}(r,t)}{h_m^2(r,t)} - \frac{\phi_{mr}(r,t)h_{mr}(r,t)}{h_m^3(r,t)}+\sum_{k=1}^nd_k\frac{\phi_{mr}(r,t)(f_{m,k})_r(r,t)}{h_m^2(r,t)f_{m,k}(r,t)} \\
& \hphantom{=}~+ \frac{\phi_{mr}(r,t)}{h_m(r,t)}\Biggl(\frac{\hat h_{m\rho}(\rho)}{\hat h_m^2(\rho)}-\sum_{k=1}^nd_k\frac{(\hat f_{m,k})_\rho(\rho)}{\hat h_m(\rho)\hat f_{m,k}(\rho)}\Biggr)\Bigg|_{\rho=\phi_m(r,t)}
,\qquad m=1,2,\;r\in(0,1),\;t\in(0,2S),
\end{align*}
the boundary conditions
\begin{align*}
\phi_m(j,t)=j, \qquad m=1,2,\;j=0,1,\;t\in(0,2S),
\end{align*}
and the initial condition
\begin{align*}
\phi_m(r,0)=r,\qquad m=1,2,\;r\in[0,1].
\end{align*}
Given $\epsilon\in(0,1)$, these $\phi_1$ and $\phi_2$ lie in $H_S^{2+\epsilon,1+\frac\epsilon2}$. In fact, they have third derivatives in $r$ that are continuous on $[0,1]\times[0,S]$, and they are smooth on $(0,1)\times[0,S]$.
By choosing $S$ sufficiently small, we ensure that $\phi_{1r}$ and $\phi_{2r}$ are strictly positive on $[0,1]\times[0,S]$. The formula 
\begin{align*}
\phi_m([0,1]\times\{t\})=[0,1],\qquad m=1,2,\;t\in[0,S],
\end{align*}
holds true. The thought process behind introducing $\phi_1$ and $\phi_2$ is explained in Remark~\ref{rem_phi1&2motiv}. 

Let $\phi_m^{-1}(\cdot,t)$ denote the inverse of the map $\phi_m(\cdot,t):[0,1]\to[0,1]$ for each $t \in[0,S]$ and $m=1,2$. Consider the real-valued functions $\bar h_1, \bar f_{1,1},\dots,\bar f_{1,n}$ and $\bar h_2, \bar f_{2,1},\dots,\bar f_{2,n}$ on $[0,1]\times[0,S]$ defined as 
\begin{align*}
\bar h_m(r,t) &= (\phi_m^{-1})_r(r,t)h_m(\phi_m^{-1}(r,t),t),\\
\bar f_{m,i}(r,t) &= f_{m,i}(\phi_m^{-1}(r,t),t),\qquad m=1,2,\;r\in[0,1],\;t\in[0,S],\;i=1,\dots,n.
\end{align*}
It is easy to verify that
\begin{align}\label{phi_mt}
\phi_{mt}(r,t) = &-\Biggl(\frac{\bar h_{m\rho}(\rho,t)}{\bar h_m^3(\rho,t)}-\sum_{k=1}^nd_k\frac{(\bar f_{m,k})_\rho(\rho,t)}{\bar h_m^2(\rho,t)\bar f_{m,k}(\rho,t)}\Biggr)\Bigg|_{\rho = \phi_m(r,t)}\notag \\ &+\Biggl(\frac{\hat h_{m\rho}(\rho)}{\bar h_m(\rho,t)\hat h_m^2(\rho)}-\sum_{k=1}^nd_k\frac{(\hat f_{m,k})_\rho(\rho)}{\bar h_m(\rho,t)\hat h_m(\rho)\hat f_{m,k}(\rho)}\Biggr)\Bigg|_{\rho = \phi_m(r,t)}, \notag\\ &\;m=1,2,\;r\in[0,1],\;t\in[0,S].
\end{align}
Formulas~\eqref{par_red_RF}--\eqref{par_red_BC}--\eqref{par_red_ini} will still hold if we replace $T$ in them by $S$ and $\bar h, \bar f_1,\dots,\bar f_n$ by $\bar h_m, \bar f_{m,1},\dots, \bar f_{m,n}$ for either $m=1$ or $m=2$. Also, $\bar h_1, \bar f_{1,1},\dots, \bar f_{1,n}$ and $\bar h_2, \bar f_{2,1},\dots,\bar f_{2,n}$ have two derivatives in $r$ and one in $t$ that are continuous on $[0,1]\times[0,S]$. 
Lemma~\ref{lem_11STE} tells us that $\bar h_1 = \bar h_2$ and $\bar f_{1,i} = \bar f_{2,i}$ when $i = 1, \dots, n$. Furthermore, according to~\eqref{phi_mt} and the standard uniqueness theorems for ordinary differential equations, $\phi_1$ must coincide with $\phi_2$ on $[0,1]\times[0,S]$. Because
\begin{align*}
h_m(r,t) &= \phi_{mr}(r,t)\bar h_m(\phi_m(r,t),t),\\
f_{m,i}(r,t)&=\bar f_{m,i}(\phi_m(r,t),t),\qquad m=1,2,\;r\in[0,1],\;t\in[0,S],\;i=1,\dots,n,
\end{align*}
it becomes clear that $h_1 = h_2$ and $f_{1,i}=f_{2,i}$ on $[0,1]\times[0,S]$ when $i=1,\dots,n$. This is a contradiction. 
\end{proof}

\begin{remark}\label{rem_phimotiv}
The following principle underlies the DeTurck trick: if the metric $g(t)$ solves the Ricci flow equation on $M_0\times(0,T)$, then for a properly chosen $t$-dependent diffeomorphism $\Phi(\cdot,t)$ of $M_0$, the pullback $(\Phi^{-1})^*(\cdot,t)g(t)$ must solve a more tractable equation. The proof of the existence part of Theorem~\ref{thm_STE} is based on this principle. To clarify, we need to make two observations. 
\begin{enumerate}
\item The function $\phi$ defines a mapping $\Phi:M\times[0,T)\to M$ via the formula 
\begin{align*}
\Phi((r,\gamma H),t) = (\phi(r,t), \gamma H),\qquad r\in[0,1],\;\gamma\in G,\;t\in[0,T).
\end{align*}
Because the derivative of $\phi$ in the first variable is positive on $[0,1]\times[0,T)$, and because~\eqref{phi_bdy} holds, $\Phi(\cdot,t)$ must be a diffeomorphism of $M$ for every $t \in [0,T)$. Assuming $g(t)$ is given by~\eqref{symm_met} and~\eqref{symm_met_pr}, we can easily check that 
\begin{align*}
(\Phi^{-1})^*(\cdot,t)g(t) = \bar h^2(r,t)\,dr\otimes dr + \bar g^r(t),\qquad r\in[0,1],\;t\in[0,T),
\end{align*}
where $\bar g^r(t)$ is the $G$-invariant metric on $G/H$ with
\begin{align*}
\bar g^r(t)(X,Y) = \bar f_1^2(r,t)Q(\pr_{\mathfrak p_1}X, \pr_{\mathfrak p_1}Y)+\dots+\bar f_n^2(r,t)Q(\pr_{\mathfrak p_n}X, \pr_{\mathfrak p_n}Y),\qquad X,Y \in \mathfrak p.
\end{align*}
Here, $\bar h, \bar f_1, \dots, \bar f_n$ obey~\eqref{def_h_f}.
\item If the functions $h,f_1,\dots,f_n$ are to satisfy~\eqref{red_RF_eq}, then $\bar h, \bar f_1,\dots, \bar f_n$ must satisfy
\begin{align*}
\bar h_t(r,t) &= \sum_{k=1}^nd_k\left(\frac{\bar f_{krr}(r,t)}{\bar h(r,t)\bar f_k(r,t)}-\frac{\bar h_r(r,t) \bar f_{kr}(r,t)}{\bar h^2(r,t) \bar f_k(r,t)}\right) \\ &\hphantom{=}~-\bar h_r(r,t)\phi_t(\phi^{-1}(r,t),t) - \bar h(r,t)(\phi_t(\phi^{-1}(r,t),t))_r,\\
\bar f_{it}(r,t) &= \frac{\bar f_{irr}(r,t)}{\bar h^2(r,t)}-\frac{\bar f_{ir}^2(r,t)}{\bar h^2(r,t) \bar f_i(r,t)} - \sum_{k,l=1}^n\gamma_{i,k}^l\frac{\bar f_i^4(r,t) - 2\bar f^4_k(r,t)}{4\bar f_i(r,t) \bar f_k^2(r,t) \bar f_l^2(r,t)} \\ &\hphantom{=}~- \frac{\beta_i}{2\bar f_i(r,t)}-\frac{\bar h_r(r,t) \bar f_{ir}(r,t)}{\bar h^3(r,t)} + \sum_{k=1}^nd_k\frac{\bar f_{ir}(r,t)\bar f_{kr}(r,t)}{\bar h^2(r,t)\bar f_k(r,t)} - \bar f_{ir}(r,t)\phi_t(\phi^{-1}(r,t),t),\\
r&\in(0,1),\;t\in(0,T),\;i=1,\dots,n.
\end{align*}
We define $\phi$ by~\eqref{def_phi} to ensure that $\frac{\bar h_{rr}}{\bar h^2}$ is the only second-order term in the first equation. The above system for $\bar h, \bar f_1, \dots, \bar f_n$ then takes the form~\eqref{11_eq} (in fact, it coincides with~\eqref{par_red_RF}), and Lemma~\ref{lem_11STE} guarantees the existence of a solution. Formulas~\eqref{def_h_f} enable us to obtain the functions $h, f_1, \dots,f_n$ from this solution. 
\end{enumerate}
\end{remark}
\begin{remark}\label{rem_phi1&2motiv}
A simple computation based on~\eqref{def_phi} and~\eqref{def_h_f} yields
\begin{align*}
\phi_t(r,t) = \frac{\phi_{rr}(r,t)}{h^2(r,t)}&-\frac{\phi_r(r,t)h_r(r,t)}{h^3(r,t)}+\sum_{k=1}^nd_k\frac{\phi_r(r,t)f_{kr}(r,t)}{h^2(r,t)f_k(r,t)} \\ &+\frac{\phi_r(r,t)}{h(r,t)}\Biggl(\frac{\hat h_\rho(\rho)}{\hat h^2(\rho)}-\sum_{k=1}^nd_k\frac{\hat f_{k\rho}(\rho)}{\hat h(\rho)\hat f_k(\rho)}\Biggr)\Bigg|_{\rho=\phi(r,t)},\qquad r\in(0,1),\;t\in(0,T).
\end{align*}
This equation motivates the definition of the maps $\phi_1$ and $\phi_2$ in the proof of the theorem. 
\end{remark}

\section{Perelman's $\mathcal F$-functional}\label{sec_MotFF}

Suppose $w$ is a smooth Riemannian metric on $M$ and $q$ is a smooth real-valued function on $M$. By definition, the Perelman $\mathcal F$-functional takes the pair $(w,q)$ to the number
\begin{align*}
\mathcal F(w,q) = \int_M(R(w) + |\nabla q|^2)e^{-q}\,d\mu.
\end{align*}
Here, $R(w)$ is the scalar curvature of $w$. The absolute value and the gradient are taken with respect to~$w$. The letter $\mu$ denotes the $w$-volume measure on $M$. The purpose of this section is to relate the Ricci flow on $M$ to the functional $\mathcal F$ and its monotonicity properties. The main challenge, of course, lies in the nonemptiness of $\partial M$.

\subsection{The modified Ricci flow}

Fix a smooth Riemannian metric $\eta(t)$ and a smooth real-valued function $\kappa(t)$ on $\partial M$ depending on a parameter $t \in [0,\infty)$. For some $T>0$ and $\delta>0$, 
suppose $\tilde g(t)$ is a $G$-invariant solution to the equation 
\begin{align}\label{tilde_RF_eq}
\frac{\partial}{\partial t}\tilde g(t) = -2\Ric(\tilde g(t))
\end{align}
on $M_0\times(0,T+\delta)$ subject to the boundary conditions
\begin{align}\label{tilde_RF_BC}
[\tilde g_{\partial M}(t)] = [\eta(t)],\qquad \mathcal H(\tilde g(t))=\kappa(t),\qquad t\in(0,T+\delta).\end{align}
The square brackets denote the conformal class. Thus, for example, $[\eta(t)]$ is the set of smooth metrics of the form $\theta\eta(t)$, where $\theta$ is a positive function on $\partial M$. The notation $\mathcal H(\tilde g(t))$ stands for the mean curvature of $\partial M$ with respect to $\tilde g(t)$. By definition, $\mathcal H(\tilde g(t))$ is the trace of $\II(\tilde g(t))$.
We impose the initial condition
\begin{align}\label{tilde_RF_ini}
\tilde g(0)=\hat g.
\end{align}
Here, $\hat g$ is the $G$-invariant metric on $M$ fixed in Section~\ref{subsec_STE_uniq_results}. Equality~\eqref{symm_ans} holds true.
It will be convenient for us to assume that $\tilde g(t)$ is smooth on $M\times[0,T+\delta)$. Remark~\ref{rem_monot_nonsmooth} below explains how this assumption can be relaxed. Note that the paper~\cite{PG12} offers a comprehensive existence theorem for solutions to~\eqref{tilde_RF_eq}--\eqref{tilde_RF_BC}--\eqref{tilde_RF_ini}. Corollary~5.1 in that paper provides a simple sufficient condition for the $G$-invariance of such solutions.

Let us consider the system of equations
\begin{align}\label{MRF_eq}
\frac{\partial}{\partial t}g(t) &= -2(\Ric(g(t)) + \Hess p(t)),\notag\\
\frac{\partial}{\partial t}p(t) &= -\Delta p(t) - R(g(t)).
\end{align}
The unknowns here are the Riemannian metric $g(t)$ and the real-valued function $p(t)$ on $M$ depending on~$t$. The notation $\Hess$ and $\Delta$ refers to the Hessian and the Laplacian with respect to $g(t)$. The relationship between system~\eqref{MRF_eq} and the Ricci flow is well-understood on closed manifolds. It is explained in detail in, e.g.,~\cite[Chapter~6]{PT06}. Essentially, solutions to the first equation of~\eqref{MRF_eq} are pullbacks of solutions to the Ricci flow by $t$-dependent diffeomorphisms. If the pair $(g(t),p(t))$ satisfies system~\eqref{MRF_eq} on a closed manifold, then the expression $\mathcal F(g(t),p(t))$ is non-decreasing in $t$. 

We supplement~\eqref{MRF_eq} with the boundary conditions
\begin{align}\label{MRF_BC}
[g_{\partial M}(t)] = [\eta(t)],\qquad \mathcal H(g(t)) = \kappa(t),\qquad \frac{\partial}{\partial \nu}p(t) = 0.
\end{align}
In the third equality, $\frac{\partial}{\partial \nu}$ denotes differentiation along the outward unit normal vector field on $\partial M$ with respect to~$g(t)$. 
Proposition~\ref{prop_MRF_STE} will explain how the boundary-value problem~\eqref{MRF_eq}--\eqref{MRF_BC} relates to the Ricci flow on~$M$. For an analogous result on closed manifolds, see~\cite[Theorem~6.4.1]{PT06}. In Section~\ref{sec_pfMonF}, we will demonstrate that $\mathcal F$ is monotone on solutions to~\eqref{MRF_eq}--\eqref{MRF_BC}.

\begin{proposition}\label{prop_MRF_STE}
There exist a smooth map $g:M\times[0,T)\to T^*M\otimes T^*M$, a smooth map $\Psi:M\times[0,T)\to M$ and a smooth function $p:M\times[0,T)\to\mathbb{R}$ such that the following statements hold:
\begin{enumerate}

\item
For every $x \in M$ and $t \in (0,T)$, the tensor $g(x,t)$ is a symmetric positive-definite tensor at the point~$x$. Thus, $g(t) = g(\cdot,t)$ is a Riemannian metric on $M$.

\item 
Given $\gamma \in G$, $x \in M$ and $t \in (0,T)$, the equalities $g(t) = \gamma^*g(t)$ and $p(\gamma x, t) = p(x,t)$ hold true.

\item
For some $h,f_1,\ldots,f_n:[0,1]\times(0,T)\to(0,\infty)$, the metric $g(t)$ satisfies~\eqref{symm_met}--\eqref{symm_met_pr}  on $M\times(0,T)$.

\item
The pair $(g(t),p(t))$, where $p(t)$ denotes the function $p(\cdot,t)$, solves system~\eqref{MRF_eq} on $M_0\times(0,T)$.

\item
The boundary conditions~\eqref{MRF_BC} are satisfied on $\partial M\times (0,T)$, and $g(0)=\hat g$ on $M$.

\item
The equality $\Psi(x,t)=x$ holds when $x\in\partial M$ and $t\in(0,T)$. The map $\Psi(\cdot,t)$ is a diffeomorphism of~$M$ for each $t\in(0,T)$. The metric $g(t)$ coincides with the pullback $\Psi^*(\cdot,t)\tilde g(t)$ of the solution $\tilde g(t)$ to problem~\eqref{tilde_RF_eq}--\eqref{tilde_RF_BC}--\eqref{tilde_RF_ini}.
\end{enumerate}
\end{proposition}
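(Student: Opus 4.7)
First, note that $\tilde g(t)$, posited at the outset of Section~\ref{sec_MotFF}, is already smooth and $G$-invariant; Lemma~\ref{lem_symm_pres} together with Hypothesis~\ref{assum_decomp_p} therefore forces the diagonal form $\tilde g(t)=\tilde h^2\,dr\otimes dr+\tilde g^r(t)$ on $M_0\times[0,T+\delta)$, parametrized by smooth positive functions $\tilde h,\tilde f_1,\ldots,\tilde f_n$ on $[0,1]\times[0,T+\delta)$. Similarly, any $G$-invariant candidate $(g,p)$ must have $g$ of the form~\eqref{symm_met}--\eqref{symm_met_pr} with some $h,f_1,\ldots,f_n$, and $p$ a function of $r$ and $t$ alone; in particular the Neumann condition $\frac{\partial}{\partial\nu}p=0$ reduces to $p_r(j,t)=0$ for $j=0,1$.

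Second, construct the pair $(g,p)$ directly. Lemma~\ref{lem_red_Ric} and the standard warped-product Hessian and Laplacian formulas convert~\eqref{MRF_eq} into $n+2$ coupled quasi-linear PDEs for $(h,f_1,\ldots,f_n,p)$ on $(0,1)\times(0,T)$. The key observation is that tracing the first equation of~\eqref{MRF_eq} and comparing with the second produces the conservation law $\partial_t(p-\log d\mu_g)=0$; the normalization $p(\cdot,0)=0$ therefore forces
\begin{align*}
p=\log\left[\frac{h\prod_k f_k^{d_k}}{\hat h\prod_k\hat f_k^{d_k}}\right]
\end{align*}
as an explicit functional of $(h,f_1,\ldots,f_n)$. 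Substituting this into the first equation of~\eqref{MRF_eq} yields a system for $(h,f_1,\ldots,f_n)$ alone which, although only weakly parabolic, is converted into a system of the form~\eqref{11_eq} by the same DeTurck-style gauge reparametrization used in Section~\ref{sec_pfSTE} to pass from~\eqref{red_RF_eq} to~\eqref{par_red_RF}. Lemma~\ref{lem_11STE} then delivers a short-time smooth solution, from which $(g,p)$ are assembled and shown to satisfy clauses~1--5 of the proposition; the Neumann condition on $p$ and the remaining parts of~\eqref{MRF_BC} are handled just as in Section~\ref{sec_pfSTE}.

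Third, recover $\Psi$ by seeking it in the $G$-equivariant form $\Psi(r,\gamma H,t)=(\psi(r,t),\gamma H)$ and solving
\begin{align*}
\psi_t=-\frac{p_r(r,t)}{\psi_r(r,t)\,\tilde h^2(\psi(r,t),t)},\qquad \psi(r,0)=r,\qquad \psi(j,t)=j,
\end{align*}
which is consistent because $p_r(j,t)=0$, and which admits a smooth short-time solution with $\psi_r>0$ by standard theory for first-order quasi-linear PDEs. A direct computation using this equation, the Ricci flow for $\tilde g$, and the explicit formula for $p$ shows that $\Psi^*\tilde g$ satisfies~\eqref{MRF_eq} with the same $p$ and the same initial data as $g$; the uniqueness part of Step~2 (argued in parallel with the uniqueness portion of the proof of Theorem~\ref{thm_STE}) then gives $g=\Psi^*\tilde g$, which is clause~6. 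The principal obstacle is Step~2: after eliminating $p$ via the conservation law, the system for $(h,f_1,\ldots,f_n)$ is only weakly parabolic, and one must choose the DeTurck gauge with care to restore strong parabolicity while respecting both~\eqref{MRF_BC} and the elimination of $p$; the required modifications of the argument of Section~\ref{sec_pfSTE} are technical but routine.
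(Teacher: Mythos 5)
Your strategy departs substantially from the paper's, and unfortunately Step~2 contains a genuine gap. The conservation law is correct: tracing the first equation of~\eqref{MRF_eq} and comparing with the second does give $\partial_t\big(p - \log\sqrt{\det g}\,\big) = 0$, and the normalization $p(\cdot,0)=0$ forces $p=\log\big[h\prod_k f_k^{d_k}/(\hat h\prod_k\hat f_k^{d_k})\big]$. However, when you substitute this into the $dr\otimes dr$ component of the first equation of~\eqref{MRF_eq}, the leading terms are
\begin{align*}
\Ric_{rr}+(\Hess p)_{rr}=-\sum_k d_k\frac{f_{krr}}{f_k}+\Big(\frac{h_{rr}}{h}+\sum_k d_k\frac{f_{krr}}{f_k}\Big)+\text{lower order}=\frac{h_{rr}}{h}+\text{lower order},
\end{align*}
so the $f_{krr}$ terms cancel and the $h$-equation becomes $h_t=-h_{rr}/h^2+\text{lower order}$. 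This is \emph{backward} parabolic in the normal direction, not "only weakly parabolic" as you assert, so Lemma~\ref{lem_11STE} cannot be invoked. Worse, applying the paper's DeTurck gauge~\eqref{def_phi} (which, for the genuine Ricci flow system~\eqref{red_RF_eq}, precisely cancels $\sum_k d_k f_{krr}$ and produces $+\bar h_{rr}/\bar h^2$) to your eliminated system annihilates the $-\bar h_{rr}$ but also injects $-\sum_k d_k\bar f_{krr}/(\bar h\bar f_k)$: the $\bar h$-equation then contains $\bar f_{krr}$ but no $\bar h_{rr}$, which still fails the hypotheses of Lemma~\ref{lem_11STE} (where each equation must carry $a\,v_{irr}$ with $a>0$ and no other second-order terms). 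A different gauge would be needed, but you would then have to re-derive compatible boundary conditions, and this is not "technical but routine."

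The difficulty is structural: the second equation of~\eqref{MRF_eq} is a \emph{backward} heat equation for $p$, so the coupled system is not forward well-posed with arbitrary initial data for $p$, and eliminating $p$ does not remove that obstruction. The paper circumvents all of this by exploiting that a $G$-invariant Ricci flow $\tilde g(t)$ is \emph{already given} on $[0,T+\delta)$ (this is the hypothesis preceding the proposition). One then solves only a \emph{linear} conjugate heat equation $\tilde p_t=-\Delta_{\tilde g}\tilde p+\tilde R\,\tilde p$ \emph{backward} from terminal data $\tilde p(\cdot,T)=1$ with homogeneous Neumann conditions (backward time makes this a well-posed forward problem), uses the Hopf lemma to get $\tilde p>0$, and then pulls $(\tilde g,\tilde p)$ back by the diffeomorphism generated by $\nabla_{\tilde g}\log\tilde p$ — which is exactly the $\psi$ you write down in Step~3, except defined in terms of $\tilde p$ rather than your $p$. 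Your Steps~1 and~3 are consistent with what the paper does; the issue is that Step~2 attempts a direct nonlinear construction that is both unnecessary (since $\tilde g$ already exists) and not forward-parabolic as written.
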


\begin{remark}\label{rem_monot_nonsmooth}
We assumed above that $\tilde g(t)$ was smooth on $M\times[0,T+\delta)$. One may obtain results analogous to Proposition~\ref{prop_MRF_STE} under weaker hypotheses. For example, take a natural number $k$ greater than~3. Instead of demanding that $\tilde g$ be smooth on $M\times[0,T+\delta)$, assume it is only smooth on $M_0\times[0,T+\delta)$. In addition, let $\frac{\partial^i}{\partial t^i}\hat\nabla^j\tilde g$ exist and be continuous on $M\times[0,T+\delta)$ whenever $2i+j\le k$. The notation $\hat\nabla$ here stands for the covariant derivative with respect to $\hat g$. Proposition~\ref{prop_MRF_STE} will hold under these hypotheses if one modifies the differentiability properties of $g$, $\Psi$ and $p$ in its formulation. Specifically, one may assert that $g$ is smooth on $M_0\times(0,T)$, while $\frac{\partial^i}{\partial t^i}\hat\nabla^j g$ exists and is continuous on $M\times[0,T)$ when $2i+j\le k-3$. The adjustments required for $\Psi$ and $p$ are of the same nature. We will not discuss them in the present paper.
\end{remark}

\begin{proof}
Lemma~\ref{lem_symm_pres} implies the equality
\begin{align*}
\tilde g(t) = \tilde h^2(r,t)\,dr\otimes dr + \tilde g^r(t),\qquad r\in[0,1],\;t\in[0,T+\delta).
\end{align*}
Here, $\tilde h$ is a positive function, and $\tilde g^r(t)$ is a $t$-dependent $G$-invariant metric on $G/H$. Because Hypothesis~\ref{assum_decomp_p} holds, there exist $\tilde f_1,\dots,\tilde f_n:[0,1]\times[0,T+\delta)\to(0,\infty)$ such that
\begin{align*}
\tilde g^r(t)(X,Y) = \tilde f_1^2(r,t)Q(\pr_{\mathfrak p_1}X, \pr_{\mathfrak p_1}Y)+\dots+\tilde f_n^2(r,t)Q(\pr_{\mathfrak p_n}X, \pr_{\mathfrak p_n}Y),\qquad X,Y \in \mathfrak p.
\end{align*}
In order to construct $g$, $\Psi$ and $p$, we need to introduce auxiliary functions $\tilde p:[0,1]\times[0,T)\to(0,\infty)$ and $\psi:[0,1]\times[0,T)\to[0,1]$. The definitions of $\tilde p$ and $\psi$ will involve $\tilde h,\tilde f_1,\dots,\tilde f_n$.

The scalar curvature of the metric $\tilde g(t)$ at the point $(r,\gamma H) \in M$ does not depend on $\gamma \in G$. Indeed, the Ricci flow equation~\eqref{RF_eq} implies
\begin{align*}
R(\tilde g(t))((r,\gamma H)) = -\frac{\tilde h_t(r,t)}{\tilde h(r,t)}-\sum_{k=1}^nd_k\frac{\tilde f_{kt}(r,t)}{\tilde f_k(r,t)}.
\end{align*}
In what follows, we will abbreviate $R(\tilde g(t))((r,\gamma H)) $ to $\tilde R(r,t)$. We thus obtain a function $\tilde R:[0,1]\times[0,T+\delta)\to\mathbb{R}$. The classical theory of linear parabolic problems (see, e.g.,~\cite[Theorem~5.3 in Chapter~IV and Theorem~12.2 in Chapter~III]{OLVSNU68}) yields the existence of a continuous $\tilde p:[0,1]\times[0,T]\to\mathbb R$, which is smooth on $[0,1]\times[0,T)$, satisfying the equation
\begin{align*}
\tilde p_t(r,t) = - \frac{1}{\tilde h^2(r,t)}\tilde p_{rr}(r,t)+\frac{\tilde h_r(r,t)}{\tilde h^3(r,t)}\tilde p_r(r,t)-\sum_{k=1}^nd_k\frac{\tilde f_{kr}(r,t)}{\tilde h^2(r,t)\tilde f_k(r,t)}\tilde p_r(r,t) &+ \tilde R(r,t)\tilde p(r,t), \\ r\in(0,1),~t&\in(0,T),
\end{align*}
the boundary conditions
\begin{align*}
\tilde p_r(j,t) = 0,\qquad j=0,1,~t\in[0,T),
\end{align*}
and the terminal condition
\begin{align*}
\tilde p(r,T)=1,\qquad r\in[0,1].
\end{align*}
According to the Hopf Lemma, $\tilde p$ must be positive. We define $\psi:[0,1]\times[0,T)\to\mathbb{R}$ by the formula
\begin{align*}
\psi_t(r,t) = \frac{\tilde p_\rho(\rho,t)}{\tilde h^2(\rho,t)\tilde p(\rho,t)}\bigg|_{\rho = \psi(r,t)},\qquad r\in[0,1],\;t\in(0,T),
\end{align*}
and the requirement that $\psi(r,0) = r$ for $r\in[0,1]$. Note that, because $\tilde p_r(j,t)= 0$ when $j=0,1$ and $t\in[0,T)$, the range of $\psi$ is actually the interval $[0,1]$. We will now use the functions $\tilde p$ and $\psi$ to produce $g$, $\Psi$ and $p$. The properties listed in the theorem will be easy to verify.

Let us set
\begin{align}\label{def_mod_h_f}
h(r,t) = \psi_r(r,t)\tilde h(\psi(r,t),t),\qquad f_i(r,t) = \tilde f_i(\psi(r,t),t),\qquad r\in[0,1],\;t\in[0,T),\;i=1,\dots,n.
\end{align}
Consider the $t$-dependent metric $g(t)$ on $M$ given by the formulas~\eqref{symm_met}--\eqref{symm_met_pr}. This metric defines a map $g: M\times[0,T)\to T^*M\otimes T^*M$ in a natural way. Further, we introduce $\Psi:M\times[0,T)\to M$ and $p:M\times[0,T)\to\mathbb R$ through the equalities
\begin{align*}
\Psi((r,\gamma H), t) &= (\psi(r,t), \gamma H), \\
p((r,\gamma H),t) &= -\log\tilde p(\psi(r,t),t),\qquad \gamma \in G,\;r\in[0,1],\;t \in[0,T).
\end{align*}
It is evident that $g$, $\Psi$ and $p$ possess the properties~1, 2, 3 and~6 listed in the proposition. A direct verification demonstrates that the pair $(g(t),p(t))$ satisfies~\eqref{MRF_eq} on $M_0\times(0,T)$. One performs an analogous verification when analysing~\eqref{MRF_eq} on closed manifolds; cf.~\cite[Section~6.4]{PT06}. With the aid of~\eqref{def_mod_h_f}, we find
\begin{align*}
[g_{\partial M}(t)]&=[\tilde g_{\partial M}(t)] = [\eta(t)],\\
\mathcal H(g(t))|_{\{j\}\times G/H} &= \sum_{k=1}^n(-1)^{j+1}d_k\frac{f_{kr}(j,t)}{h(j,t)f_k(j,t)}\\
&=\sum_{k=1}^n(-1)^{j+1}d_k\frac{\tilde f_{kr}(j,t)}{\tilde h(j,t) \tilde f_k(j,t)}\\
&=\mathcal H(\tilde g(t))|_{\{j\}\times G/H} = \kappa(t)|_{\{j\}\times G/H},\qquad j=0,1,\;t\in(0,T).
\end{align*}
Finally,
\begin{align*}
\frac{\partial}{\partial \nu}p(t)|_{\{j\}\times G/H} = \frac{(-1)^j\psi_r(j,t)\tilde p_r(j,t)}{h(j,t)\tilde p(j,t)} = 0,\qquad j=0,1,\;t\in(0,T),
\end{align*}
and $\psi(\cdot,0)$ is the identity map on $[0,1]$. Thus, $g$ and $p$ satisfy statements~4 and~5 in the formulation of the proposition. 
\end{proof}

\subsection{Monotonicity of $\mathcal F$}\label{sec_pfMonF}
The following result demonstrates the connection between~\eqref{MRF_eq}--\eqref{MRF_BC} and the monotonicity of the functional~$\mathcal F$ on $M$.

\begin{theorem}\label{thm_Fmonot}
Consider a smooth map $g:M\times(0,T)\to T^*M\otimes T^*M$ and a smooth function $p:M\times(0,T)\to\mathbb{R}$ satisfying statements~1 through~4 of Proposition~\ref{prop_MRF_STE}. Suppose the boundary conditions~\eqref{MRF_BC} hold on $\partial M\times(0,T)$ with $\eta(t)$ independent of $t$ and $\kappa(t)$ identically equal to~0. Then 
\begin{align*}
\frac{d}{dt}\mathcal F(g(t),p(t)) = 2\int_M |\Ric(g(t))+\Hess p(t)|^2e^{-p(t)}\,d\mu,\qquad t\in(0,T),
\end{align*}
where the absolute value, the Hessian and the volume measure $\mu$ are computed with respect to $g(t)$. Consequently, the quantity $\mathcal F(g(t),p(t))$ is non-decreasing in $t\in(0,T)$.
\end{theorem}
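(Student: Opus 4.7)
The plan is to differentiate $\mathcal F(g(t),p(t))$ in $t$, reduce the interior integrand to the Perelman expression known from the closed-manifold case, and then show that all boundary contributions vanish under the stated hypotheses. The starting observation is a cancellation that requires only~\eqref{MRF_eq}: since
\begin{align*}
\partial_t\,d\mu=\tfrac12 g^{ij}(\partial_t g_{ij})\,d\mu=-(R(g(t))+\Delta p(t))\,d\mu,\qquad \partial_t e^{-p(t)}=(\Delta p(t)+R(g(t)))\,e^{-p(t)},
\end{align*}
the weighted measure $e^{-p(t)}\,d\mu$ is time-independent. Consequently
\begin{align*}
\frac{d}{dt}\mathcal F(g(t),p(t))=\int_M\bigl[\partial_t R(g(t))+\partial_t|\nabla p(t)|^2\bigr]e^{-p(t)}\,d\mu.
\end{align*}

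Next I would substitute the standard variation formulas for $R$ and $|\nabla p|^2$ under~\eqref{MRF_eq} and integrate by parts repeatedly, following Perelman's argument on closed manifolds (see, e.g.,~\cite[Chapter~6]{PT06}). On a closed manifold the interior integrand collapses to $2|\Ric(g(t))+\Hess p(t)|^2\,e^{-p(t)}$; in our setting each application of the divergence theorem picks up a surface integral on $\partial M$, and a careful bookkeeping should yield
\begin{align*}
\frac{d}{dt}\mathcal F(g(t),p(t))=2\int_M|\Ric(g(t))+\Hess p(t)|^2e^{-p(t)}\,d\mu+\mathrm{BT}(t),
\end{align*}
where $\mathrm{BT}(t)$ is a sum of surface integrals whose integrands involve the normal derivative $\tfrac{\partial}{\partial\nu}p(t)$ (together with its tangential derivatives), the mean curvature $\mathcal H(g(t))$, the second fundamental form $\II(g(t))$, and the restriction of $\Ric(g(t))+\Hess p(t)$ to $T\partial M\otimes T\partial M$.

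The heart of the argument is to show $\mathrm{BT}(t)\equiv 0$. The Neumann condition $\tfrac{\partial}{\partial\nu}p(t)=0$ along $\partial M\times(0,T)$ eliminates every term containing $\tfrac{\partial}{\partial\nu}p(t)$ or a tangential derivative of it. The condition $\mathcal H(g(t))=\kappa(t)\equiv 0$ removes every term with $\mathcal H(g(t))$ as a factor. The remaining contributions have the shape of integrals of $\langle\II(g(t)),(\Ric(g(t))+\Hess p(t))|_{T\partial M\otimes T\partial M}\rangle\,e^{-p(t)}$, and here the conformal condition enters: since $\eta(t)\equiv\eta$ is $t$-independent and $[g_{\partial M}(t)]=[\eta]$, there is a positive $t$-dependent function $\theta(t)$ on $\partial M$ with $g_{\partial M}(t)=\theta(t)\eta$, so $\partial_t g_{\partial M}(t)$ is pointwise a scalar multiple of $g_{\partial M}(t)$. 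Restricting the first equation of~\eqref{MRF_eq} to $T\partial M\otimes T\partial M$ then forces $(\Ric(g(t))+\Hess p(t))|_{T\partial M\otimes T\partial M}$ to be pointwise a scalar multiple of $g_{\partial M}(t)$. The surviving boundary integrand therefore reduces to a multiple of $\mathrm{tr}_{g_{\partial M}(t)}\II(g(t))=\mathcal H(g(t))=0$, so $\mathrm{BT}(t)\equiv 0$. The non-negativity of $|\Ric(g(t))+\Hess p(t)|^2$ then delivers the monotonicity of $\mathcal F$.

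The main obstacle I anticipate is the bookkeeping in the intermediate step: each integration by parts against $e^{-p(t)}$ on expressions like $\Delta(R(g(t))+\Delta p(t))$ and $\Div\Div(\Ric(g(t))+\Hess p(t))$ produces boundary contributions whose systematic identification requires repeated use of the Gauss–Codazzi equations and of commutator identities for covariant derivatives near $\partial M$. Care is needed to verify that, after all simplifications, every remaining surface integrand factors through one of the three categories listed above, so that the boundary conditions of the theorem can be applied cleanly to conclude $\mathrm{BT}(t)\equiv 0$.
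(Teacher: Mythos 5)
Your plan is the same as the paper's at the conceptual level: reduce $\frac{d}{dt}\mathcal F$ to $2\int_M|\Ric+\Hess p|^2 e^{-p}\,d\mu$ plus boundary terms and kill the boundary terms using the three boundary conditions. The conformal-class argument you give for the $\langle\II,(\Ric+\Hess p)|_{T\partial M\otimes T\partial M}\rangle$ term is exactly the right observation, and it is precisely how that term dies in the paper. However, the proposal is not a proof. The step you describe as "a careful bookkeeping should yield" a surface integrand that factors through your three categories is the entire technical content of the theorem, and you leave it undone. The paper sidesteps the general Gauss--Codazzi/commutator bookkeeping by invoking the well-known identity $\frac{d}{dt}\mathcal F = 2\int_M|\Ric+\Hess p|^2e^{-p}\,d\mu+2\int_{\partial M}\bigl[(\Div\Ric)(\nu)+\frac{\partial}{\partial\nu}R-(\Ric+\Hess p)(\nu,\nabla p)\bigr]e^{-p}\,d\sigma$ and then computing the bracketed quantity \emph{explicitly} in cohomogeneity-one coordinates via the functions $h,f_1,\dots,f_n$ from~\eqref{symm_met}--\eqref{symm_met_pr}. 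The explicit expression (equation~\eqref{frkF_bdy}) is what makes the vanishing transparent; without it, your categorization is an unverified conjecture about a delicate cancellation.

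Two smaller points. First, the Neumann condition alone does \emph{not} eliminate the term $(\Ric+\Hess p)(\nu,\nabla p)$: it kills only the normal component $\frac{\partial p}{\partial\nu}$ of $\nabla p$, whereas the tangential gradient of $p$ along $\partial M$ survives. It vanishes here because of statement~2 of Proposition~\ref{prop_MRF_STE} ($G$-invariance), which forces $p$ to be constant on each boundary component; you should invoke this, as the paper does. Second, after the explicit computation one encounters not only terms with $\mathcal H$ as a literal factor but also a term proportional to $\partial_t\mathcal H$; this vanishes because $\kappa(t)\equiv0$ holds identically in $t$, a slightly stronger use of the hypothesis than "removes every term with $\mathcal H$ as a factor." So: right strategy and right reason the boundary terms die, but the missing computation is where the theorem actually lives, and the symmetric-coordinate calculation the paper performs is the efficient way to complete it.
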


\begin{remark}
While it is convenient for us to assume in Theorem~\ref{thm_Fmonot} that $g$ and $p$ are smooth on $M\times(0,T)$, we can establish the result under weaker hypotheses. It suffices to demand that $\frac{\partial^i}{\partial t^i}\hat\nabla^jg$ and $\frac{\partial^i}{\partial t^i}\hat\nabla^jp$ exist and be continuous on $M\times(0,T)$ when $2i+j\le4$ and $2i+j\le3$, respectively. We remind the reader that $\hat\nabla$ denotes covariant differentiation with respect to $\hat g$.
\end{remark}

\begin{proof}
A computation shows that
\begin{align*}
\frac{d}{dt}\mathcal F(g(t),p(t)) =&\; 2\int_M |\Ric(g(t))+\Hess p(t)|^2e^{-p(t)}\,d\mu\\
&+2\int_{\partial M}(\Div \Ric(g(t)))(\nu)e^{-p(t)}\,d\sigma\\
&-2\int_{\partial M}(\Ric(g(t))+\Hess p(t))(\nu, \nabla p(t))e^{-p(t)}\,d\sigma\\
&+2\int_{\partial M}\frac{\partial}{\partial \nu}R(g(t))e^{-p(t)}\,d\sigma.
\end{align*}
Here, $\Div$ is the divergence with respect to $g(t)$, and $\nu$ is the outward unit normal vector field on $\partial M$ with respect to $g(t)$. The letter $\sigma$ denotes the volume measure of the metric induced by $g(t)$ on $\partial M$. The above formula for $\frac{d}{dt}\mathcal F(g(t),p(t))$ is well-known (see, for example,~\cite{JCAM12} and the related computations in~\cite[Section~6.2]{PT06}). The author first learned of it from Xiaodong Cao's unpublished notes in 2007.

Statement~2 of Proposition~\ref{prop_MRF_STE} implies that, given $t \in (0,T)$, the function $p(t)$ is constant on $\{r\}\times G/H$ for each $r\in[0,1]$. Also, $p(t)$ satisfies the boundary condition $\frac{\partial}{\partial \nu}p(t)=0$. Consequently, the gradient $\nabla p(t)$ vanishes on $\partial M$. This means we can rewrite the above formula for $\frac{d}{dt}\mathcal F(g(t),p(t))$ as
\begin{align*}
\frac{d}{dt}\mathcal F(g(t),p(t)) = 2\int_M|\Ric(g(t))+\Hess p(t)|^2e^{-p(t)}\,d\mu + 2\int_{\partial M}\mathfrak F(g(t))e^{-p(t)}\,d\sigma,
\end{align*}
where
\begin{align}\label{def_frkF}
\mathfrak F(g(t)) = (\Div \Ric(g(t)))(\nu)+\frac{\partial}{\partial \nu}R(g(t)).
\end{align}
To prove the theorem, it suffices to show that $\mathfrak F(g(t)) = 0$ for all $t \in (0,T)$. Note that we could further simplify the expression in the right-hand side of~\eqref{def_frkF} by utilizing the contracted second Bianchi identity. Such a simplification, however, would only hinder our proof. 

The metric $g(t)$ is given by~\eqref{symm_met}--\eqref{symm_met_pr}. Equalities~\eqref{MRF_eq} yield
\begin{align*}
\Ric(g(t)) = \zeta(r,t)\,dr\otimes dr + \Ric^r(g(t)),\qquad r\in[0,1],\;t\in(0,T),
\end{align*}
with $\zeta:[0,1]\times(0,T)\to\mathbb{R}$ a smooth function and $\Ric^r(g(t))$ the $G$-invariant $(0,2)$-tensor field on $G/H$ satisfying
\begin{align*}
\Ric^r(g(t))(X,Y) = &-\sum_{k=1}^nd_k\left(f_k(r,t)f_{kt}(r,t)+\frac{f_k(r,t)f_{kr}(r,t)\bar p_r(r,t)}{h^2(r,t)}\right)Q(\pr_{\mathfrak p_k}X, \pr_{\mathfrak p_k}Y),\qquad X,Y \in \mathfrak p.
\end{align*}
Here, $\bar p:[0,1]\times(0,T)\to\mathbb R$ is such that
\begin{align*}
\bar p(r,t)=p((r,\gamma H),t),\qquad \gamma \in G,\;r\in[0,1],\;t \in(0,T).
\end{align*}
We compute $(\Div \Ric(g(t)))(\nu)$ (cf.~\cite[Lemma 4.2]{AP13}) and find
\begin{align*}
(\Div \Ric(g(t)))(\nu)&|_{\{j\}\times G/H} = (-1)^j\left(\frac{\zeta_r(j,t)}{h^3(j,t)} - \frac{2\zeta(j,t)h_r(j,t)}{h^4(j,t)}\right)\\
&+(-1)^j\sum_{k=1}^nd_k\left(\frac{\zeta(j,t)f_{kr}(j,t)}{h^3(j,t)f_k(j,t)}+\frac{f_{kt}(j,t)f_{kr}(j,t)}{h(j,t)f_k^2(j,t)}\right),\qquad j = 0,1,\;t\in(0,T).
\end{align*}
Also, we have 
\begin{align*}
\frac{\partial}{\partial \nu}R(g(t))|_{\{j\}\times G/H} =&\; \frac{(-1)^{j+1}}{h(j,t)}\left.\left(\frac{\zeta(r,t)}{h^2(r,t)}-\sum_{k=1}^nd_k\left(\frac{f_{kt}(r,t)}{f_k(r,t)}+\frac{f_{kr}(r,t)\bar p_r(r,t)}{h^2(r,t)f_k(r,t)}\right)\right)_r\right|_{r=j} \\
=&\;\frac{(-1)^j}{h(j,t)}\left(\sum_{k=1}^nd_k\frac{f_{kr}(j,t)}{f_k(j,t)}\right)_t + (-1)^j \sum_{k=1}^nd_k \frac{f_{kr}(j,t)\bar p_{rr}(j,t)}{h^3(j,t)f_k(j,t)} \\
&+(-1)^{j+1}\left(\frac{\zeta_r(j,t)}{h^3(j,t)} - \frac{2\zeta(j,t)h_r(j,t)}{h^4(j,t)}\right),\qquad j=0,1,\;t\in(0,T). 
\end{align*}
Therefore, the formula
\begin{align}\label{frkF_bdy}
\mathfrak F(g(t))|_{\{j\}\times G/H} =&\; (-1)^j\sum_{k=1}^nd_k\left(\frac{\zeta(j,t)f_{kr}(j,t)}{h^3(j,t)f_k(j,t)}+\frac{f_{kt}(j,t)f_{kr}(j,t)}{h(j,t)f_k^2(j,t)}\right) \notag\\ &+\frac{(-1)^j}{h(j,r)}\left(\sum_{k=1}^nd_k\frac{f_{kr}(j,t)}{f_k(j,t)}\right)_t + (-1)^j \sum_{k=1}^nd_k \frac{f_{kr}(j,t)\bar p_{rr}(j,t)}{h^3(j,t)f_k(j,t)} \notag\\
=&-\frac{\zeta(j,t)}{h^2(j,t)}\mathcal H(g(t))|_{\{j\}\times G/H} + (-1)^j\sum_{k=1}^nd_k\frac{f_{kt}(j,t)}{f_k(j,t)}\frac{f_{kr}(j,t)}{h(j,t)f_k(j,t)}\notag\\
&-\frac{1}{h(j,t)}(h(j,t)\mathcal H(g(t))|_{\{j\}\times G/H})_t-\frac{\bar p_{rr}(j,t)}{h^2(j,t)}\mathcal H(g(t))|_{\{j\}\times G/H},\qquad j=0,1,\;t\in(0,T),
\end{align}
must hold.

Because $[g_{\partial M}(t)]$ is independent of $t \in (0,T)$, there exist positive functions $\xi_0$ and $\xi_1$ on $(0,T)$ such that
\begin{align*}
f_k(j,t) = \xi_j(t)f_k\biggl(j,\frac T2\biggr),\qquad j=0,1,\;t\in(0,T),\;k=1,\dots,n.
\end{align*}
This implies
\begin{align*}
\frac{f_{kt}(j,t)}{f_k(j,t)} = \frac{\xi_{jt}(t)}{\xi_j(t)},\qquad j=0,1,\;t\in(0,T),\;k=1,\dots,n.
\end{align*}
Consequently,
\begin{align*}
\mathfrak F(g(t))|_{\{j\}\times G/H} =&-\frac{\zeta(j,t)}{h^2(j,t)}\mathcal H(g(t))|_{\{j\}\times G/H} -\frac{\xi_{jt}(t)}{\xi_j(t)}\mathcal H(g(t))|_{\{j\}\times G/H}\notag\\
&-\frac{1}{h(j,t)}(h(j,t)\mathcal H(g(t))|_{\{j\}\times G/H})_t-\frac{\bar p_{rr}(j,t)}{h^2(j,t)}\mathcal H(g(t))|_{\{j\}\times G/H},\qquad j=0,1,\;t\in(0,T).
\end{align*}
The assumption that $\mathcal H(g(t)) = 0$ now yields $\mathfrak F(g(t)) = 0$ for all $t\in(0,T)$. 
\end{proof}

The paper~\cite{JL13} conducts a detailed study of the weighted Gibbons-Hawking-York functional $I_\infty$ on manifolds with boundary. Its Proposition~2 
computes the variation of $I_\infty$. One can derive Theorem~\ref{thm_Fmonot} from that result instead of arguing as above. However, the calculations in the present paper are somewhat simpler than those in~\cite{JL13} because they exploit the symmetries of $M$. Besides, they provide a new formula for the derivative $\frac{d}{dt}\mathcal F(g(t),p(t))$ under the assumptions that $g$ and $p$ are smooth and satisfy statements~1 through~4 of Proposition~\ref{prop_MRF_STE}: equality~\eqref{frkF_bdy} implies
\begin{align*}
\frac{d}{dt}\mathcal F(g(t),p(t)) =&\; 2\int_M|\Ric(g(t))+\Hess p(t)|^2e^{-p(t)}\,d\mu\\
&+2\int_{\partial M}(- \Ric(g(t))(\nu,\nu)\mathcal H(g(t))+\langle (\Ric(g(t)))_{\partial M},\II(g(t))\rangle )\,e^{-p(t)}\,d\sigma\\
&-2\int_{\partial M}\bigg(\frac{1}{|\hat \nu|}(|\hat \nu|\mathcal H(g(t)))_t
+\Delta p(t)\mathcal H(g(t))\bigg) d\sigma 
\\
=&\; 2\int_M|\Ric(g(t))+\Hess p(t)|^2\,e^{-p(t)}\,d\mu\\
&+2\int_{\partial M}(\langle (\Ric(g(t)))_{\partial M},\II(g(t))\rangle-(\mathcal H(g(t)))_t)\,e^{-p(t)}\,d\sigma,\qquad t\in(0,T).
\end{align*}
The angular brackets here mean the scalar product in the tensor bundle over $\partial M$ induced by $g_{\partial M}(t)$. Interpreting $\Ric(g(t))$ as a map from $TM\otimes TM$ to $\mathbb R$, we write $(\Ric(g(t)))_{\partial M}$ for the restriction of this map to $T\partial M\otimes T\partial M$. The notation $\hat \nu$ stands for the outward unit normal vector field on $\partial M$ with respect to the metric $\hat g$.

\section*{Acknowledgements}

I am grateful to Xiaodong Cao and Jeff Streets for the stimulating discussions on monotone functionals and the Ricci flow.

\end{document}